\documentclass[a4paper, 12pt]{article}
\usepackage{amsmath}
\usepackage{amssymb}
\usepackage{amsthm}
\usepackage[top=20truemm, left=20truemm, right=20truemm]{geometry}

\usepackage{amscd}
\newtheorem{dfn}{Definition}[section]
\newtheorem{thm}[dfn]{Theorem}
\newtheorem{lem}[dfn]{Lemma}
\newtheorem{rem}[dfn]{Remark}
\newtheorem{cor}[dfn]{Corollary}

\newtheorem{prop}[dfn]{Proposition}

\usepackage{amscd}
\usepackage{graphicx}
\usepackage[all]{xy}

\title{A topological invariant for continuous fields of Cuntz algebras II}
\author{Taro Sogabe \\\small Graduate School of Science, Kyoto University, Japan\\
\small staro@math.kyoto-u.ac.jp}

\begin{document}
\maketitle
\abstract
We investigate an invariant for continuous fields of the Cuntz algebra $\mathcal{O}_{n+1}$ introduced in \cite{S}, and find a way to obtain a continuous field of $\mathbb{M}_n(\mathcal{O}_\infty)$ from that of $\mathcal{O}_{n+1}$ using the construction of the invariant.
By Brown's representability theorem,
this gives a bijection from the set of the isomorphism classes of continuous fields of $\mathcal{O}_{n+1}$ to those of $\mathbb{M}_n(\mathcal{O}_\infty)$.
As a consequence,
we obtain a new proof for M. Dadarlat's classification result of continuous fields of $\mathcal{O}_{n+1}$ arising from vector bundles,
which corresponds to those of $\mathbb{M}_n(\mathcal{O}_\infty)$ stably isomorphic to the trivial field.

\section{Introduction}
Our purpose is to investigate the invariant for continuous fields of the Cuntz algebras introduced in \cite{S}.
The Cuntz algebra $\mathcal{O}_{n+1}$ is a typical example of a Kirchberg algebra,
and its continuous fields over a finite CW-complex $X$ are classified in \cite{D1} when the cohomology groups $H^*(X, \mathbb{Z})$ do not admit $n-$torsion.
All continuous fields classified in \cite{D1} are constructed via the Cuntz--Pimsner algebras,
and it is proved in \cite{D1} that, for general $X$, not every continuous field of $\mathcal{O}_{n+1}$ is given by the Cuntz--Pimsner construction.

Recently,
M. Dadarlat and U. Pennig introduced a generalized cohomology $E_D^*$ in \cite{DP} for every strongly self-absorbing C*-algebra $D$ satisfying the UCT including the infinite Cuntz algebra $\mathcal{O}_\infty$.
In \cite{S},
using the reduced cohomology $\bar{E}^*_{\mathcal{O}_\infty}$,
we define an invariant $\mathfrak{b}_{\mathcal{O}_\infty}$ of continuous fields of $\mathcal{O}_{n+1}$,
and show that a continuous field $\mathcal{O}$ over a finite CW-complex $X$ is given via the Cuntz--Pimsner algebra if and only if $\mathfrak{b}_{\mathcal{O}_\infty}([\mathcal{O}])=0\in \bar{E}^1_{\mathcal{O}_\infty}(X)$.
By \cite{D2},
the set of the isomorphism classes of continuous fields of $\mathcal{O}_{n+1}$ over a finite CW-complex $X$ is identified with the homotopy set $[X, \operatorname{BAut}(\mathcal{O}_{n+1})]$,
and the invariant is a map $\mathfrak{b}_{\mathcal{O}_\infty} : [X, \operatorname{BAut}(\mathcal{O}_{n+1})]\to \bar{E}^1_{\mathcal{O}_\infty}(X)$.

In this paper,
we construct a natural transformation $$T_X : [X, \operatorname{BAut}(\mathcal{O}_{n+1})]\to [X, \operatorname{BAut}(\mathbb{M}_n(\mathcal{O}_\infty))]$$ which turns out to be bijective by Brown's representability theorem (Theorem \ref{M}).
Since the homotopy set $[X, \operatorname{BAut}(\mathbb{M}_n(\mathcal{O}_\infty))]$ is identified with the set of the isomorphism classes of locally trivial continuous fields of $\mathbb{M}_n(\mathcal{O}_\infty)$,
the map $\operatorname{B}(\eta)_* : [X, \operatorname{BAut}(\mathbb{M}_n(\mathcal{O}_\infty))]\ni[\mathcal{B}]\mapsto [\mathbb{K}\otimes\mathcal{B}]\in \bar{E}^1_{\mathcal{O}_\infty}(X)$ is defined (see Section 2.2),
and we have $-\mathfrak{b}_{\mathcal{O}_\infty}=\operatorname{B}(\eta)_*\circ T_X$.

Since the inverse image $\mathfrak{b}_{\mathcal{O}_\infty}^{-1}(0)$ is equal to the set of the Cuntz--Pimsner algebras of vector bundles classified in \cite{D1} (see \cite[{Sec. 4}]{S}),
M. Dadarlat's classification result \cite[{Th. 5.3}]{D1}, which enables us to count the cardinality of the set of the isomorphism classes of those Cuntz--Pimsner algebras, can be proved by counting the cardinality of the set of the isomorphism classes of the continuous fields of $\mathbb{M}_n(\mathcal{O}_\infty)$ stably isomorphic to the trivial field (see Remark \ref{dd}, Corollary \ref{ds}).


We also investigate the map $T_{SX}$ for the reduced suspension $SX$,
and show that the map gives a group isomorphism $T_{SX} : [X, \operatorname{Aut}(\mathcal{O}_{n+1})]\to [X, \operatorname{Aut}(\mathbb{M}_n(\mathcal{O}_\infty))]$ (Corollary \ref{si}).
\section*{Acknowledgements}
The author would like to thank Prof. M. Izumi for suggesting to investigate the $K^0(X)$-module homomorphism in Remark \ref{m} and for his support and encouragement.
The author also would like to thank Prof. U. Pennig for helpful discussions about Lemma \ref{g}, \ref{key}, Remark \ref{m} and for many stimulating conversations.

\section{Preliminaries}
\subsection{Notation}
Let $\mathbb{K}$ be the C*-algebra of compact operators on the separable infinite dimensional Hilbert space,
and let $\mathbb{M}_n$ be the n by n matrix algebra.
For a C*-algebra $A$,
we denote by $K_i(A)$ the i-th K-group and denote by $[p]_0\in K_0(A)$ (resp. $[u]_1\in K_1(A)$) the class of the projection $p$ (resp. the unitary $u$).
If $A$ is unital,
we denote by $1_A$ the unit and by $U(A)$ the group of unitary elements. 
Let $C(X)$ be the C*-algebra of all continuous functions on $X$.
We write $K^i(X)=K_i(C(X)), \tilde{K}^i(X)=K_i(C_0(X, x_0))$ where $C_0(X, x_0)$ is the set of functions vanishing at $x_0\in X$.
We refer to \cite{B} for the K-groups.

Let $(X, x_0)$ and $(Y, y_0)$ be two pointed finite CW-complexes,
and let $[X, Y]$ (resp. $[X, Y]_0$) be the set of the homotopy classes of the continuous maps (resp. the set of the base point preserving homotopy classes of the base point preserving continuous maps).
The i-th homotopy group of $X$ is $\pi_i(X):=[S^i, X]_0$,
and, for $Y$ with $\pi_0(Y)=\pi_1(Y)=0$,
the natural map $[X, Y]_0\to [X, Y]$ is bijective (see \cite[{Th. 6.57}]{AT}).

We refer to \cite{N, D2} for the definition of the continuous $C(X)$-algebras.
Let $\mathcal{P}\to X$ be a principal $\operatorname{Aut}(A)$ bundle,
and let $\mathcal{A}$ be the associated bundle $\mathcal{P}\times_{\operatorname{Aut}(A)}A$.
Then,
the section algebra $\Gamma(X, \mathcal{A})$ is a locally trivial continuous $C(X)$-algebra.
We identify the section algebra with the associated bundle,
and write $\mathcal{A}$ by abuse of notation.
Let $\mathcal{A}_x$ denote the image of the evaluation map $ev_x : \mathcal{A}\to\mathcal{A}/C_0(X, x)\mathcal{A}\cong A$.
We always assume that the fiber $\mathcal{A}_x$ is nuclear.
Since those principal $\operatorname{Aut}(A)$ bundles are classified by $[X, \operatorname{BAut}(A)]$,
we denote the $C(X)$-linear isomorphism class of the $C(X)$-algebra by $[\mathcal{A}]\in [X, \operatorname{BAut}(A)]$.
For two locally trivial continuous $C(X)$-algebras $\mathcal{A}$, $\mathcal{B}$,
we can define the tensor product $\mathcal{A}\otimes_{C(X)}\mathcal{B}$ (see \cite{Bl}).

Let $E_{n+1}$ be the universal C*-algebra, called the Cuntz--Toeplitz algebra, generated by $n+1$ isometries with mutually orthogonal ranges.
It is known that the unital map $\mathbb{C}\to E_{n+1}$ is a KK-equivalence.
Let $\{T_i\}_{i=1}^{n+1}$ be the canonical generators, and let $e:=1-\sum_{i=1}^{n+1}T_iT_i^*$ be the minimal projection which generates the only non-trivial ideal of $E_{n+1}$ isomorphic to $\mathbb{K}$.
The quotient algebra $\mathcal{O}_{n+1}:=E_{n+1}/\mathbb{K}$ is called the Cuntz algebra.
We denote by $\mathcal{O}_\infty$ the universal C*-algebra generated by countably infinite isometries with mutually orthogonal ranges.
The inclusion $\mathbb{K}\to E_{n+1}$ gives the map 
$K_0(\mathbb{K})=\mathbb{Z}\xrightarrow{-n} K_0(E_{n+1})=\mathbb{Z}$,
and one has
$$K_0(\mathcal{O}_{n+1})=\mathbb{Z}_n,\quad K_0(\mathcal{O}_\infty)=\mathbb{Z},\quad K_1(\mathcal{O}_{n+1})=K_1(\mathcal{O}_\infty)=0$$
(see \cite{C}).
One has $\mathcal{O}_{n+1}\cong\mathcal{O}_{n+1}\otimes\mathcal{O}_\infty$ and $\mathcal{O}_{n+1}\cong (E_{n+1}\otimes\mathcal{O}_\infty)/(\mathbb{K}\otimes\mathcal{O}_\infty)$ by the classification result of the Kirchberg algebras.

The homotopy groups of $\operatorname{Aut}(\mathcal{O}_{n+1})$ and $\operatorname{Aut}(\mathbb{M}_n(\mathcal{O}_\infty))$ are given by \cite[{Th. 5.9}]{D4} :
$$\pi_{2k}(\operatorname{Aut}(\mathcal{O}_{n+1}))=\pi_{2k}(\operatorname{Aut}(\mathbb{M}_n(\mathcal{O}_\infty)))=0, \;\; \pi_{2k+1}(\operatorname{Aut}(\mathcal{O}_{n+1}))=\pi_{2k+1}(\operatorname{Aut}(\mathbb{M}_n(\mathcal{O}_\infty)))=\mathbb{Z}_n,$$
$k\geq 0$.
Thus,
two sets $[S^k, \operatorname{BAut}(\mathcal{O}_{n+1})], [S^k, \operatorname{BAut}(\mathbb{M}_n(\mathcal{O}_\infty))]$ have the same cardinality, and one has $[X, \operatorname{BAut}(\mathcal{O}_{n+1})]_0=[X, \operatorname{BAut}(\mathcal{O}_{n+1})]$, $[X, \operatorname{BAut}(\mathbb{M}_n(\mathcal{O}_{\infty}))]_0=[X, \operatorname{BAut}(\mathbb{M}_n(\mathcal{O}_{\infty}))]$.


\subsection{The Dadarlat--Pennig theory}
We briefly explain the cohomology group $\bar{E}_{\mathcal{O}_\infty}^1(X)$.
Recall that $\mathcal{O}_\infty$ is a strongly self-absorbing C*-algebra,
in other words,
there is a continuous path of unitary $\{u_t\}_{t\in [0, 1)}\subset \mathcal{O}_\infty^{\otimes 2}$ and an isomorphism $\phi : \mathcal{O}_\infty\to\mathcal{O}_\infty^{\otimes 2}$ satisfying $\lim_{t\to 1}||\phi(d)-u_t(1\otimes d)u_t^*||=0$.
We refer to \cite{TW} for the properties of the strongly self-absorbing C*-algebras.
The following map
$$\Delta_X : (C(X)\otimes\mathcal{O}_\infty)^{\otimes 2}\ni f_1(x)\otimes f_2(y)\mapsto \phi^{-1}(f_1(x)\otimes f_2(x))\in C(X)\otimes \mathcal{O}_\infty$$
gives $K_0(C(X)\otimes \mathcal{O}_\infty)$ a ring structure which coincides with the ring structure of $K^0(X)$ coming from the tensor products of vector bundles.
Let $K^0(X)^\times :=\pm 1+ \tilde{K}^0(X)$ denote the group of the invertible elements.
\begin{thm}[{\cite[{Th. 2.22, 3.8, Lem. 2.8, Cor. 3.9}]{DP}}]\label{DP}
Let $X$ be a connected compact metrizable space, and let $\operatorname{Aut}_0(\mathbb{K}\otimes\mathcal{O}_\infty)$ be the path component of $\operatorname{Aut}(\mathbb{K}\otimes\mathcal{O}_\infty)$ containing $\operatorname{id}_{\mathbb{K}\otimes \mathcal{O}_\infty}$.

1) For two continuous maps $\alpha, \beta : X\to \operatorname{Aut}(\mathbb{K}\otimes\mathcal{O}_\infty)$ which are identified with the $C(X)$-linear isomorphisms of $C(X)\otimes\mathbb{K}\otimes\mathcal{O}_\infty$,
one has
$$K_0(\Delta_X)\circ K_0(\alpha\otimes\beta)([(1_{C(X)}\otimes e\otimes 1_{\mathcal{O}_\infty})^{\otimes 2}]_0)=K_0(\alpha\circ\beta)([1_{C(X)}\otimes e\otimes 1_{\mathcal{O}_\infty}]_0)$$
and the following injective map, whose range is $K^0(X)^\times$, is multiplicative $\colon$
$$[X, \operatorname{Aut}(\mathbb{K}\otimes\mathcal{O}_\infty)]\ni [\alpha]\mapsto [\alpha(1_{C(X)}\otimes e\otimes 1_{\mathcal{O}_\infty})]_0\in K_0(C(X)\otimes \mathbb{K}\otimes\mathcal{O}_\infty).$$

2) The subgroup $[X, \operatorname{Aut}_0(\mathbb{K}\otimes\mathcal{O}_\infty)]\subset [X, \operatorname{Aut}(\mathbb{K}\otimes\mathcal{O}_\infty)]$ is identified with $1+\tilde{K}^0(X)$.

3) The homotopy set $E^1_{\mathcal{O}_\infty}(X):=[X, \operatorname{BAut}(\mathcal{K}\otimes\mathcal{O}_\infty)]$ has a group structure defined by the tensor product $\otimes_{C(X)}$ of locally trivial continuous $C(X)$-algebras of $\mathbb{K}\otimes\mathcal{O}_\infty$,
and $\bar{E}_{\mathcal{O}_\infty}^1(X):=[X, \operatorname{BAut}_0(\mathbb{K}\otimes\mathcal{O}_\infty)]$ is a subgroup of $E^1_{\mathcal{O}_\infty}(X)$.
\end{thm}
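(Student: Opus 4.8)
The statement we are asked to prove is Theorem \ref{DP}, which as stated is a citation-backed assembly of several results from \cite{DP} (Theorems 2.22, 3.8, Lemma 2.8, Corollary 3.9). So the "proof" is really a guided reconstruction of why these statements hold, and the plan is to explain the mechanism rather than reprove everything from scratch.

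The plan is to build the multiplicative map of part 1) from the strongly self-absorbing structure. First I would recall that since $\mathcal{O}_\infty$ is strongly self-absorbing, the map $\Delta_X$ is well defined up to homotopy and, passing to $K$-theory, $K_0(\Delta_X) : K_0((C(X)\otimes\mathcal{O}_\infty)^{\otimes 2})\to K_0(C(X)\otimes\mathcal{O}_\infty)$ is independent of the choice of the path $\{u_t\}$ and of $\phi$; this is the key homotopy-invariance input and uses that inner automorphisms act trivially on $K$-theory together with the approximate unitary equivalence $\phi(\cdot)\approx u_t(1\otimes\cdot)u_t^*$. Then I would verify the identity $K_0(\Delta_X)\circ K_0(\alpha\otimes\beta)([(1\otimes e\otimes 1)^{\otimes 2}]_0)=K_0(\alpha\circ\beta)([1\otimes e\otimes 1]_0)$ by tracking the composition of $C(X)$-linear automorphisms through $\Delta_X$: the point is that $\Delta_X\circ(\alpha\otimes\beta)$ and $(\alpha\circ\beta)\circ\Delta_X$ agree up to a homotopy of $*$-homomorphisms that fixes the projection $1\otimes e\otimes 1$ in $K$-theory, which again reduces to the strong self-absorption relation. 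Injectivity of $[\alpha]\mapsto[\alpha(1\otimes e\otimes 1)]_0$ and the identification of its range with $K^0(X)^\times=\pm 1+\tilde K^0(X)$ follow from: (i) $[1\otimes e\otimes 1]_0$ is a generator-type element whose image under any $C(X)$-automorphism is a unit of the ring $K^0(X)$ since the automorphism is multiplicative for $\Delta_X$; (ii) every unit of $K^0(X)$ is realized, using that $\mathbb{K}\otimes\mathcal{O}_\infty$ absorbs $\mathcal{O}_\infty$ and that line bundles (and their formal inverses, i.e. $\pm 1+\tilde K^0(X)$) can be implemented by automorphisms; (iii) injectivity because an automorphism inducing the trivial unit is homotopic to one fixing $e$, hence lies in the path component, and by a standard argument (the classifying space being an infinite loop space / the group being abelian) the homotopy class is then trivial — this last point is the heart of Theorem 3.8 in \cite{DP}.

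For part 2) I would identify $[X,\operatorname{Aut}_0(\mathbb{K}\otimes\mathcal{O}_\infty)]$ with $1+\tilde K^0(X)$ as the kernel of the "sign" map $K^0(X)^\times\to\mathbb{Z}_2=\pi_0(\operatorname{Aut}(\mathbb{K}\otimes\mathcal{O}_\infty))$ that records which path component $\alpha$ lands in; restricting the multiplicative bijection of part 1) to the path component $\operatorname{Aut}_0$ cuts out exactly the units of the form $1+\tilde K^0(X)$, and one checks surjectivity onto this subgroup by exhibiting explicit automorphisms coming from the $\mathcal{O}_\infty$-absorption that realize a given $1+\xi$ with $\xi\in\tilde K^0(X)$. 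For part 3), the group structure on $E^1_{\mathcal{O}_\infty}(X)=[X,\operatorname{BAut}(\mathbb{K}\otimes\mathcal{O}_\infty)]$ is defined by the fiberwise tensor product $\otimes_{C(X)}$ of locally trivial continuous fields with fiber $\mathbb{K}\otimes\mathcal{O}_\infty$: associativity, commutativity up to isomorphism, and the trivial field as unit all follow from $(\mathbb{K}\otimes\mathcal{O}_\infty)\otimes(\mathbb{K}\otimes\mathcal{O}_\infty)\cong\mathbb{K}\otimes\mathcal{O}_\infty$ together with the strong self-absorption of $\mathcal{O}_\infty$, which also supplies inverses (this is exactly where $\Delta_X$ re-enters, making $\operatorname{Aut}(\mathbb{K}\otimes\mathcal{O}_\infty)$ into a group-like object up to homotopy). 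Then $\bar E^1_{\mathcal{O}_\infty}(X)=[X,\operatorname{BAut}_0(\mathbb{K}\otimes\mathcal{O}_\infty)]$ is a subgroup because $\operatorname{BAut}_0$ is a sub-$H$-space, corresponding under the loop-space description to the subgroup from part 2).

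The main obstacle I expect is (iii) in the first paragraph: showing that an automorphism acting trivially on the distinguished projection in $K$-theory is actually null-homotopic, equivalently that the invariant $[\alpha]\mapsto[\alpha(1\otimes e\otimes 1)]_0$ is \emph{injective} and not merely well defined. This requires the full strength of the Dadarlat--Pennig machinery — a classification of $*$-homomorphisms / automorphisms up to homotopy in terms of $K$-theoretic data for strongly self-absorbing targets — rather than a soft argument, so in the write-up I would invoke \cite[Th.\ 2.22, 3.8]{DP} for precisely this step and keep the rest of the exposition self-contained.
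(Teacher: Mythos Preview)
The paper does not prove this theorem at all: Theorem~\ref{DP} is stated purely as a citation of \cite[Th.~2.22, 3.8, Lem.~2.8, Cor.~3.9]{DP}, with no accompanying argument. Your proposal is therefore not comparable to any proof in the paper; it is instead a reasonable expository sketch of the Dadarlat--Pennig mechanism, and you correctly identify that the injectivity step (your point (iii)) is where the real work lies and must be imported from \cite{DP}. For the purposes of this paper, the appropriate ``proof'' is simply the citation, which is what the authors do.
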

For a continuous field $[\mathcal{A}]\in \bar{E}^1_{\mathcal{O}_\infty}(X)$,
one has another field denoted by $\mathcal{A}^{-1}$ satisfying $-[\mathcal{A}]=[\mathcal{A}^{-1}]\in \bar{E}^1_{\mathcal{O}_\infty}(X)$ (i.e., a $C(X)$-linear isomorphism $\mathcal{A}\otimes_{C(X)}\mathcal{A}^{-1}\to C(X)\otimes\mathbb{K}\otimes\mathcal{O}_\infty$ exists).
For a locally trivial continuous field $\mathcal{B}$ of $\mathbb{M}_n(\mathcal{O}_\infty)$,
one has a locally trivial continuous field $\mathbb{K}\otimes\mathcal{B}$ of $\mathbb{K}\otimes\mathcal{O}_\infty$.
Since $\operatorname{Aut}(\mathbb{M}_n(\mathcal{O}_\infty))$ is path connected,
the group homomorphism
$$\eta : \operatorname{Aut}(\mathbb{M}_n(\mathcal{O}_\infty))\ni\sigma\mapsto {\rm id}_\mathbb{K}\otimes\sigma\in \operatorname{Aut}_0(\mathbb{K}\otimes\mathbb{M}_n(\mathcal{O}_\infty))$$
gives a natural map $\operatorname{B}(\eta)_* : [X, \operatorname{BAut}(\mathbb{M}_n(\mathcal{O}_\infty))]\ni [\mathcal{B}]\mapsto [\mathbb{K}\otimes\mathcal{B}]\in \bar{E}^1_{\mathcal{O}_\infty}(X)$.

Let $\operatorname{Aut}_X(\mathcal{A})$ be the group of all $C(X)$-linear isomorphisms of a locally trivial continuous $C(X)$-algebra $\mathcal{A}$ of $\mathbb{K}\otimes\mathcal{O}_\infty$.
We fix an isomorphism $\mathcal{A}\otimes(\mathbb{K}\otimes\mathcal{O}_\infty)\ni f\otimes d\mapsto f\otimes (1_{C(X)}\otimes d)\in \mathcal{A}\otimes_{C(X)}(C(X)\otimes \mathbb{K}\otimes\mathcal{O}_\infty)$.
This gives $K_0(\mathcal{A})$ a $K_0(C(X)\otimes\mathbb{K}\otimes\mathcal{O}_\infty)$-module structure by
$$\cdot [q]_0 : K_0(\mathcal{A})\ni [p]_0\mapsto [p\otimes q]_0\in K_0(\mathcal{A}\otimes_{C(X)}(C(X)\otimes\mathbb{K}\otimes\mathcal{O}_\infty)), \quad [q]_0\in K_0(C(X)\otimes\mathbb{K}\otimes\mathcal{O}_\infty).$$
\begin{lem}\label{g}
In the above setting,
the followings hold $\colon$

1) For every $\alpha\in \operatorname{Aut}_X(\mathcal{A})$,
there is an element $a \in K^0(X)^\times$ with $K_0(\alpha)=\cdot a$,

2) For every $a\in K^0(X)^\times$,
there is an element $\alpha\in \operatorname{Aut}_X(\mathcal{A})$ with $\cdot a=K_0(\alpha)$.
\end{lem}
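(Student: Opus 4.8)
The plan is to deduce both statements from Theorem~\ref{DP}(1) by absorbing a trivial copy of $\mathbb K\otimes\mathcal O_\infty$. Write $\mathcal D:=C(X)\otimes\mathbb K\otimes\mathcal O_\infty$, $e_0:=1_{C(X)}\otimes e\otimes 1_{\mathcal O_\infty}\in\mathcal D$, and $\iota\colon\mathcal A\to\mathcal A\otimes_{C(X)}\mathcal D$, $\iota(a)=a\otimes e_0$. Then $\iota$ is a full corner embedding into an $\mathcal O_\infty$-absorbing $C(X)$-algebra, hence a $KK_X$-equivalence, and the natural identification $K_0(\mathcal A)\xrightarrow{\cong}K_0(\mathcal A\otimes_{C(X)}\mathcal D)$ underlying the $K^0(X)$-module structure is exactly $K_0(\iota)$, i.e.\ $[p]_0\mapsto[p\otimes e_0]_0=[p]_0\cdot[e_0]_0$. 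The one observation that does all the work is that for every $C(X)$-linear $f\colon\mathcal A\to\mathcal A$ one has the identity $\iota\circ f=(f\otimes_{C(X)}\operatorname{id}_{\mathcal D})\circ\iota$ (both sides send $a$ to $f(a)\otimes e_0$); hence $K_0(f)$ commutes with the module structure, i.e.\ $K_0(f)$ is $K^0(X)$-linear. In particular every $\cdot a$ with $a\in K^0(X)$ is central in $\operatorname{End}_{K^0(X)}(K_0(\mathcal A))$, so any $K^0(X)$-linear conjugate of $\cdot a$ equals $\cdot a$. I will also use the elementary fact that $K^0(X)^\times=\pm 1+\tilde K^0(X)$ generates $K^0(X)$ as an abelian group.

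For part 2, given $a\in K^0(X)^\times$, Theorem~\ref{DP}(1) applied to the trivial field $\mathcal D$ provides $\beta\in\operatorname{Aut}_X(\mathcal D)$ with $[\beta(e_0)]_0=a$ (here one uses the identification of $C(X)$-linear automorphisms of $\mathcal D$ with continuous maps $X\to\operatorname{Aut}(\mathbb K\otimes\mathcal O_\infty)$). Fix any $C(X)$-linear isomorphism $\Phi\colon\mathcal A\otimes_{C(X)}\mathcal D\xrightarrow{\cong}\mathcal A$ (it exists since $\mathcal A$ absorbs the trivial $\mathbb K\otimes\mathcal O_\infty$-field) and put $\alpha:=\Phi\circ(\operatorname{id}_{\mathcal A}\otimes_{C(X)}\beta)\circ\Phi^{-1}\in\operatorname{Aut}_X(\mathcal A)$. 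Since $(\operatorname{id}_{\mathcal A}\otimes_{C(X)}\beta)\circ\iota$ sends $p$ to $p\otimes\beta(e_0)$, we get $K_0(\iota)^{-1}\circ K_0(\operatorname{id}_{\mathcal A}\otimes_{C(X)}\beta)\circ K_0(\iota)=\,\cdot a$ on $K_0(\mathcal A)$; conjugating by $K_0(\Phi\circ\iota)$, which is $K^0(X)$-linear by the observation above, and using centrality of $\cdot a$, gives $K_0(\alpha)=\,\cdot a$.

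For part 1, fix $\alpha\in\operatorname{Aut}_X(\mathcal A)$. Since $E^1_{\mathcal O_\infty}(X)$ is a group (Theorem~\ref{DP}(3)) there is an inverse field $\mathcal A^{-1}$ with $C(X)$-linear isomorphisms $\Psi\colon\mathcal A\otimes_{C(X)}\mathcal A^{-1}\xrightarrow{\cong}\mathcal D$ and $\mathcal A^{-1}\otimes_{C(X)}\mathcal A\cong_{C(X)}\mathcal D$. Set $\gamma:=\Psi\circ(\alpha\otimes_{C(X)}\operatorname{id}_{\mathcal A^{-1}})\circ\Psi^{-1}\in\operatorname{Aut}_X(\mathcal D)$; then $a:=[\gamma(e_0)]_0\in K^0(X)^\times$ by Theorem~\ref{DP}(1), and the multiplicativity in that theorem gives $K_0(\gamma)([\delta(e_0)]_0)=[\gamma(\delta(e_0))]_0=a\cdot[\delta(e_0)]_0$ for every $\delta\in\operatorname{Aut}_X(\mathcal D)$, so $K_0(\gamma)$ agrees with $\cdot a$ on $K^0(X)^\times$ and hence, by additivity and the generation fact, on all of $K_0(\mathcal D)=K^0(X)$. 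Now tensor on the right with $\mathcal A$: the automorphism $\alpha\otimes_{C(X)}\operatorname{id}_{\mathcal A^{-1}}\otimes_{C(X)}\operatorname{id}_{\mathcal A}$ of $\mathcal A\otimes_{C(X)}\mathcal A^{-1}\otimes_{C(X)}\mathcal A$ is transported to an automorphism of $\mathcal A$ in two ways, by regrouping the triple product as $\mathcal A\otimes_{C(X)}(\mathcal A^{-1}\otimes_{C(X)}\mathcal A)\cong\mathcal A\otimes_{C(X)}\mathcal D\cong\mathcal A$ and as $(\mathcal A\otimes_{C(X)}\mathcal A^{-1})\otimes_{C(X)}\mathcal A\cong\mathcal D\otimes_{C(X)}\mathcal A\cong\mathcal A$; the first yields an automorphism whose $K_0$-map is a $K^0(X)$-linear conjugate of $K_0(\alpha)$, the second one whose $K_0$-map is a $K^0(X)$-linear conjugate of $K_0(\gamma)=\cdot a$ (each exactly as in part 2), and the two transported automorphisms are themselves conjugate by a $C(X)$-linear automorphism of $\mathcal A$; chaining these conjugations and using centrality of $\cdot a$ forces $K_0(\alpha)=\cdot a$.

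The ingredients that are not internal to this discussion are the $C(X)$-linear $\mathcal O_\infty$-absorption statements — that $\mathcal A\cong_{C(X)}\mathcal A\otimes_{C(X)}\mathcal D$ and $\mathcal A^{-1}\otimes_{C(X)}\mathcal A\cong_{C(X)}\mathcal D$, and that $K_0(\iota)$ is the module identification — together with the description of $\operatorname{Aut}_X(\mathcal D)$ as $C(X,\operatorname{Aut}(\mathbb K\otimes\mathcal O_\infty))$ needed to invoke Theorem~\ref{DP}(1); these are standard facts about $\mathcal O_\infty$-stable $C(X)$-algebras. Granting them, the remaining difficulty — and the part where I expect to spend the most care — is keeping the various $\otimes_{C(X)}$-regroupings and the induced $K^0(X)$-linear conjugations coherent in the transport argument of part 1.
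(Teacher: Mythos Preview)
Your proof is correct and takes a genuinely different route from the paper's. The paper proves 1) by passing to the five-fold product $\mathcal{A}\otimes_{C(X)}(\mathcal{A}^{-1}\otimes_{C(X)}\mathcal{A})^{\otimes_{C(X)}2}$ and invoking the homotopy-theoretic fact (via Theorem~\ref{DP},\,2)) that the flip of $(\mathbb{K}\otimes\mathcal{O}_\infty)^{\otimes 2}$ lies in $\operatorname{Aut}_0$; this lets one slide $\alpha$ from the outer $\mathcal{A}$-slot into the block $(\mathcal{A}^{-1}\otimes_{C(X)}\mathcal{A})^{\otimes 2}$, which is then trivialized by a single isomorphism $\theta$ to $\mathcal{D}$, after which Theorem~\ref{DP},\,1) reads off $a$ directly. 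You instead extract from the tautology $\iota\circ f=(f\otimes\operatorname{id})\circ\iota$ that every $K_0(f)$ with $f$ $C(X)$-linear is $K^0(X)$-linear, hence $\cdot a$ is central among such maps, and work only in the triple product $\mathcal{A}\otimes_{C(X)}\mathcal{A}^{-1}\otimes_{C(X)}\mathcal{A}$ with its two bracketings. Your route carries a bit more bookkeeping with the regroupings but is purely module-theoretic and avoids Theorem~\ref{DP},\,2) altogether; the paper's route is shorter once the flip-in-$\operatorname{Aut}_0$ fact is granted. Two minor remarks: your intermediate claim that $K_0(\gamma)=\cdot a$ on all of $K^0(X)$ is correct but not actually needed---the second regrouping uses only $[\gamma(e_0)]_0=a$, exactly parallel to your argument for 2); and the paper writes out only 1), dismissing 2) as ``similar argument,'' so your explicit treatment there is a welcome addition.
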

\begin{proof}
We prove only 1).
Fix an isomorphism $\theta : (\mathcal{A}^{-1}\otimes_{C(X)}\mathcal{A})^{\otimes_{C(X)}2}\to C(X)\otimes\mathbb{K}\otimes\mathcal{O}_\infty$.
One has the following commutative diagram
$$\xymatrix{
\mathcal{A}\otimes(\mathbb{K}\otimes\mathcal{O}_\infty)\ar[d]\ar[r]^{\alpha\otimes{\rm id}}&\mathcal{A}\otimes(\mathbb{K}\otimes\mathcal{O}_\infty)\ar[d]\\
\mathcal{A}\otimes_{C(X)}(C(X)\otimes\mathbb{K}\otimes\mathcal{O}_\infty)\ar[r]^{\alpha\otimes{\rm id}}&\mathcal{A}\otimes_{C(X)}(C(X)\otimes\mathbb{K}\otimes\mathcal{O}_\infty)\\
\mathcal{A}\otimes_{C(X)}(\mathcal{A}^{-1}\otimes\mathcal{A})^{\otimes 2}\ar[u]^{{\rm id}\otimes\theta}\ar[r]^{\quad\;\alpha\otimes({\rm id}_{\mathcal{A}^{-1}}\otimes{\rm id}_{\mathcal{A}})^{\otimes 2}}&\mathcal{A}\otimes_{C(X)}(\mathcal{A}^{-1}\otimes\mathcal{A})^{\otimes 2}.\ar[u]^{{\rm id}\otimes\theta}
}$$
Since the flip automorphism $\sigma : (\mathbb{K}\otimes\mathcal{O}_\infty)^{\otimes 2}\ni x\otimes y\mapsto y\otimes x\in (\mathbb{K}\otimes\mathcal{O}_\infty)^{\otimes 2}$ fixing the minimal projection $(e\otimes 1_{\mathcal{O}_\infty})^{\otimes 2}$ is contained in $\operatorname{Aut}_0((\mathbb{K}\otimes\mathcal{O}_\infty)^{\otimes 2})$ by Theorem \ref{DP}, 2),
one has $K_0(\alpha\otimes({\rm id}_{\mathcal{A}^{-1}}\otimes{\rm id}_{\mathcal{A}})^{\otimes 2})=K_0({\rm id}_{\mathcal{A}}\otimes ({\rm id}_{\mathcal{A}^{-1}}\otimes \alpha\otimes {\rm id}_{\mathcal{A}^{-1}}\otimes{\rm id}_{\mathcal{A}}))$.
We have an element $$a:=[\theta\circ({\rm id}_{\mathcal{A}^{-1}}\otimes \alpha\otimes {\rm id}_{\mathcal{A}^{-1}}\otimes{\rm id}_{\mathcal{A}})\circ\theta^{-1}(1_{C(X)}\otimes e\otimes 1_{\mathcal{O}_\infty})]_0\in K^0(X)^\times $$
satisfying $K_0(\alpha)=\cdot a$.

Using Theorem \ref{DP},
similar argument proves the statement 2).
\end{proof}
\begin{prop}\label{cls}
Let $(X, x_0)$ be a pointed, path connected, finite CW-complex.
For $[\mathcal{A}]\in \operatorname{Im}(\operatorname{B}(\eta)_*)\subset\bar{E}^1_{\mathcal{O}_\infty}(X)$,
we fix an isomorphism $\varphi : \mathcal{A}_{x_0}\cong \mathbb{K}\otimes\mathcal{O}_\infty$.
Then,
the following map is a well-defined bijection
$$\operatorname{B}(\eta)_*^{-1}([\mathcal{A}])\ni [\mathcal{B}]\mapsto [[\rho_\mathcal{B}(e\otimes1_\mathcal{B})]_0]\in \{[p]_0\in K_0(\mathcal{A}) | K_0(\varphi\circ ev_{x_0})([p]_0)=n\}/\sim,$$
where $\rho_\mathcal{B} : \mathbb{K}\otimes\mathcal{B}\to \mathcal{A}$ is an isomorphism satisfying $[(\varphi\circ ev_{x_0}\circ\rho_\mathcal{B})(e\otimes 1_{\mathcal{B}})]_0=n\in K_0(\mathbb{K}\otimes\mathcal{O}_\infty)$.
Here,
the equivalence relation is defined by $[p]_0\sim [r]_0\Leftrightarrow [p]_0\cdot a=[r]_0$ for some $a\in 1+\tilde{K}^0(X)$.
\end{prop}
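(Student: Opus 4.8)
The plan is to view each field $\mathcal{B}$ in the fiber $\operatorname{B}(\eta)_*^{-1}([\mathcal{A}])$ through the full projection $\rho_\mathcal{B}(e\otimes 1_\mathcal{B})\in\mathcal{A}$ it determines, which cuts out a corner $C(X)$-isomorphic to $\mathcal{B}$, and to control the residual ambiguity using Lemma \ref{g}. Three elementary observations will be used repeatedly: $[\mathcal{B}]\in\operatorname{B}(\eta)_*^{-1}([\mathcal{A}])$ means precisely that $\mathbb{K}\otimes\mathcal{B}$ and $\mathcal{A}$ are $C(X)$-isomorphic; since $X$ is connected and the fibers of $\mathcal{A}$ and of $\mathbb{K}\otimes\mathcal{B}$ are simple, a projection in such an algebra is full if and only if it is non-zero in every fiber, in which case its fiberwise $K_0$-class is a single integer; and $ev_{x_0}(e\otimes 1_\mathcal{B})=e\otimes 1_{\mathcal{B}_{x_0}}$ has class $n$ in $K_0(\mathbb{K}\otimes\mathbb{M}_n(\mathcal{O}_\infty))\cong\mathbb{Z}$, so for any $C(X)$-isomorphism $\rho\colon\mathbb{K}\otimes\mathcal{B}\to\mathcal{A}$ the integer $K_0(\varphi\circ ev_{x_0}\circ\rho)([e\otimes 1_\mathcal{B}]_0)$ equals $\pm n$. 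Hence, composing with an element of $\operatorname{Aut}_X(\mathcal{A})$ realizing $-1\in K^0(X)^\times$ (Lemma \ref{g}, 2)), which acts by $-1$ on $K_0(\mathcal{A}_{x_0})$, whenever the sign is wrong, a normalized $\rho_\mathcal{B}$ exists, and $\rho_\mathcal{B}(e\otimes 1_\mathcal{B})$ is then full with $[\rho_\mathcal{B}(e\otimes 1_\mathcal{B})]_0$ in the target set.

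Well-definedness splits into two checks. If $\rho_\mathcal{B}$ and $\rho_\mathcal{B}'$ are two normalized isomorphisms, then $\rho_\mathcal{B}'\circ\rho_\mathcal{B}^{-1}\in\operatorname{Aut}_X(\mathcal{A})$, and Lemma \ref{g}, 1) gives $K_0(\rho_\mathcal{B}'\circ\rho_\mathcal{B}^{-1})=\cdot\,a$ for some $a\in K^0(X)^\times$; evaluating $[\rho_\mathcal{B}'(e\otimes 1_\mathcal{B})]_0=[\rho_\mathcal{B}(e\otimes 1_\mathcal{B})]_0\cdot a$ at $x_0$ forces $n\,a|_{x_0}=n$, so $a\in 1+\tilde K^0(X)$ and the $\sim$-class is unchanged. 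If $\psi\colon\mathcal{B}\to\mathcal{B}'$ is a $C(X)$-isomorphism, then $\operatorname{id}_\mathbb{K}\otimes\psi$ carries $e\otimes 1_\mathcal{B}$ to $e\otimes 1_{\mathcal{B}'}$, so $\rho_\mathcal{B}\circ(\operatorname{id}_\mathbb{K}\otimes\psi)^{-1}$ is a normalized isomorphism for $\mathcal{B}'$ with the same image; hence the construction depends only on $[\mathcal{B}]$.

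For injectivity, suppose the images of $[\mathcal{B}]$ and $[\mathcal{B}']$ agree, say $[\rho_\mathcal{B}(e\otimes 1_\mathcal{B})]_0\cdot a=[\rho_{\mathcal{B}'}(e\otimes 1_{\mathcal{B}'})]_0$ with $a\in 1+\tilde K^0(X)$. Choosing $\alpha\in\operatorname{Aut}_X(\mathcal{A})$ with $K_0(\alpha)=\cdot\,a$ (Lemma \ref{g}, 2)) and replacing $\rho_\mathcal{B}$ by $\alpha\circ\rho_\mathcal{B}$, which stays normalized since $a|_{x_0}=1$, I may assume $[\rho_\mathcal{B}(e\otimes 1_\mathcal{B})]_0=[\rho_{\mathcal{B}'}(e\otimes 1_{\mathcal{B}'})]_0$ in $K_0(\mathcal{A})$. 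Then I invoke the key fact that two full projections in $\mathcal{A}$ with equal $K_0(\mathcal{A})$-class are Murray--von Neumann equivalent, so the two projections have $C(X)$-isomorphic corners; since $\rho_\mathcal{B}$ identifies the corner $(e\otimes 1_\mathcal{B})(\mathbb{K}\otimes\mathcal{B})(e\otimes 1_\mathcal{B})\cong\mathcal{B}$ with the corner of $\mathcal{A}$ at $\rho_\mathcal{B}(e\otimes 1_\mathcal{B})$, and similarly for $\mathcal{B}'$, we obtain $\mathcal{B}\cong_{C(X)}\mathcal{B}'$, i.e. $[\mathcal{B}]=[\mathcal{B}']$.

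For surjectivity, given $[p]_0$ in the target set I take a projection $p\in\mathcal{A}$ representing it; being non-zero in each fiber it is full, so $\mathcal{B}:=p\mathcal{A}p$ (with $1_\mathcal{B}=p$) is a continuous $C(X)$-algebra with fibers $p_x\mathcal{A}_xp_x\cong\mathbb{M}_n(\mathcal{O}_\infty)$, and it is locally trivial because over a contractible open set a full projection of the constant field with fiber-class $n$ is Murray--von Neumann equivalent to a constant one. Since $p$ is full, $\mathcal{B}$ is $C(X)$-linearly Morita equivalent to $\mathcal{A}$, so $\mathbb{K}\otimes\mathcal{B}\cong_{C(X)}\mathbb{K}\otimes\mathcal{A}\cong_{C(X)}\mathcal{A}$ and $[\mathcal{B}]\in\operatorname{B}(\eta)_*^{-1}([\mathcal{A}])$; moreover the canonical Morita identification $\mathbb{K}\otimes\mathcal{B}\to\mathcal{A}$ sends $[e\otimes p]_0$ to $[p]_0$ and is therefore normalized, so taking it as $\rho_\mathcal{B}$ shows the image of $[\mathcal{B}]$ is $[[p]_0]$. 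The steps carrying the real content are the key fact used in injectivity and its analogue over a contractible base used for local triviality, the stability $\mathbb{K}\otimes\mathcal{A}\cong_{C(X)}\mathcal{A}$, and the realizability of the relevant $K_0$-classes by projections; all of these rest on the proper infiniteness of the fibers via a cell-by-cell patching and homotopy argument over $X$, and I expect the Murray--von Neumann classification of full projections in $\mathcal{A}$ to be the main obstacle, the remainder being bookkeeping with Theorem \ref{DP} and Lemma \ref{g}.
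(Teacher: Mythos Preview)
Your argument is correct and follows essentially the same route as the paper: well-definedness and injectivity via Lemma \ref{g} together with cancellation of properly infinite full projections, and surjectivity via the corner $p\mathcal{A}p$. Where you sketch the cell-by-cell inputs (realizing $K_0$-classes by full projections, Murray--von Neumann equivalence of full projections with equal $K_0$-class, local triviality of the corner), the paper simply cites \cite[Th.~1.1, 2.7]{D3}; your explicit normalization of $\rho_\mathcal{B}$ and the check that $a\in 1+\tilde K^0(X)$ by evaluation at $x_0$ are details the paper leaves implicit.
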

\begin{proof}
First,
we show taht the map is well-defined.
For two continuous fields $\mathcal{B}_1, \mathcal{B}_2$ with $[\mathcal{B}_1]=[\mathcal{B}_2]\in \operatorname{B}(\eta)_*^{-1}([\mathcal{A}])$ and two $C(X)$-linear isomorphisms $\rho_i : \mathbb{K}\otimes\mathcal{B}_i\to \mathcal{A}$ satisfying $[(\varphi\circ ev_{x_0}\circ\rho_{i})(e\otimes 1_{\mathcal{B}_i})]_0=n$,
we show $[\rho_1(e\otimes 1_{\mathcal{B}_1})]_0\sim [\rho_2(e\otimes 1_{\mathcal{B}_2})]_0$.
Since we have an isomorphism $\gamma : \mathcal{B}_1\to \mathcal{B}_2$,
the following map 
$$\beta :=\rho_2\circ({\rm id}_\mathbb{K}\otimes\gamma)\circ\rho_1^{-1}\in\operatorname{Aut}_X(\mathcal{A})$$
satisfies $\beta\circ\rho_1(e\otimes 1_{\mathcal{B}_1})=\rho_2(e\otimes 1_{\mathcal{B}_2})$,
and Lemma \ref{g} shows $[\rho_1(e\otimes 1_{\mathcal{B}_1})]_0\sim [\rho_2(e\otimes 1_{\mathcal{B}_2})]_0$.
By \cite[{Th. 1.1, 2.7}]{D3},
the element $[p(\mathcal{A})p]$ is sent to $[[p]_0]$ and this map is surjective.

Finally,
we prove the injectivity.
Suppose $[\rho_1(e\otimes 1_{\mathcal{B}_1})]_0\sim [\rho_2(e\otimes 1_{\mathcal{B}_2})]_0$.
Then,
Lemma \ref{g} shows there is a map $\alpha\in\operatorname{Aut}_X(\mathcal{A})$ with $[\alpha(\rho_1(e\otimes 1_{\mathcal{B}_1}))]_0=[\rho_2(e\otimes 1_{\mathcal{B}_2})]_0$.
Now the cancellation of the properly infinite full projections shows $\mathcal{B}_1\cong \mathcal{B}_2$.
\end{proof}
\begin{rem}\label{dd}
For $[\mathcal{A}]=0$,
Proposition \ref{cls} allows us to count the number of the isomorphism classes of the continuous fields of $\mathbb{M}_n(\mathcal{O}_\infty)$ stably isomorphic to $C(X)\otimes\mathbb{K}\otimes\mathcal{O}_\infty$,
which is equal to $|(n+\tilde{K}^0(X))/(1+\tilde{K}^0(X))|$.
\end{rem}


\subsection{The invariant $\mathfrak{b}_{\mathcal{O}_\infty}$}
An automorphism of $E_{n+1}\otimes\mathcal{O}_\infty$ induces an automorphism of $\mathcal{O}_{n+1}\cong (E_{n+1}\otimes\mathcal{O}_\infty)/(\mathbb{K}\otimes\mathcal{O}_\infty)$,
and one has a group homomorphism $q : \operatorname{Aut}(E_{n+1}\otimes\mathcal{O}_\infty)\to \operatorname{Aut}(\mathcal{O}_{n+1})$.
\begin{thm}[{\cite[{Cor. 3.15, Def. 4.1}]{S}}]\label{b}
Let $X$ be a finite CW-complex.

1) The group homomorphism $q : \operatorname{Aut}(E_{n+1}\otimes\mathcal{O}_\infty)\to\operatorname{Aut}(\mathcal{O}_{n+1})$ is a weak homotopy equivalence.

2) For every continuous field $\mathcal{O}$ of $\mathcal{O}_{n+1}$,
one has an exact sequence of $C(X)$-algebras
$$0\to \mathcal{A}\to\mathcal{E}\to\mathcal{O}\to 0,$$
where we denote by $\mathcal{E}$ (resp. $\mathcal{A}$) a continuous field of $E_{n+1}\otimes\mathcal{O}_\infty$ (resp. $\mathbb{K}\otimes\mathcal{O}_\infty$),
and the following map is well-defined $\colon$
$$\mathfrak{b}_{\mathcal{O}_\infty} : [X, \operatorname{BAut}(\mathcal{O}_{n+1})]\ni [\mathcal{O}]\mapsto [\mathcal{A}]\in \bar{E}^1_{\mathcal{O}_\infty}(X).$$
\end{thm}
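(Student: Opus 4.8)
The plan is to deduce 2) from 1) by the associated-bundle formalism, so that the analytic content is concentrated in 1). Throughout write $B=E_{n+1}\otimes\mathcal{O}_\infty$, $I=\mathbb{K}\otimes\mathcal{O}_\infty$ and $Q=\mathcal{O}_{n+1}=B/I$. First I would record that $q$ is a well-defined continuous homomorphism: since $\mathcal{O}_\infty$ is simple and exact, every ideal of $B$ has the form $J\otimes\mathcal{O}_\infty$ with $J$ an ideal of $E_{n+1}$, and $\mathbb{K}$ is the only non-trivial ideal of $E_{n+1}$, so $I$ is invariant under every automorphism of $B$ and $q$ descends. To see $q$ is surjective, note that any $\phi\in\operatorname{Aut}(Q)$ fixes $[1_Q]_0$ in $K_0(Q)=\mathbb{Z}_n$, hence acts trivially on $K_*(Q)$; the extension $0\to I\to B\to Q\to 0$ is classified by an element of $KK^1(Q,I)\cong KK^1(\mathcal{O}_{n+1},\mathcal{O}_\infty)\cong\mathbb{Z}_n$, a group on which $\operatorname{Aut}(Q)$ acts only through its (trivial) action on $K$-theory, so $\phi$ preserves this class and therefore lifts to $\operatorname{Aut}(B)$ by the classification of absorbing extensions of $\mathcal{O}_{n+1}$ by $\mathbb{K}\otimes\mathcal{O}_\infty$. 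I would then prove that $q$ is a Serre fibration by a parametrised version of the same argument — lifting an automorphism defined over a disk relative to its boundary, using vanishing of the obstruction together with a stable-uniqueness theorem for extensions to patch the lifts — obtaining the exact sequence
$$\cdots\to\pi_k(\ker q)\to\pi_k(\operatorname{Aut}(B))\xrightarrow{q_*}\pi_k(\operatorname{Aut}(Q))\to\pi_{k-1}(\ker q)\to\cdots .$$
Thus 1) reduces to the weak contractibility of $G:=\ker q$.

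To handle $G$ I would use that $I$ is an essential ideal of $B$: every automorphism of $B$ extends uniquely to $M(I)$, and one that is the identity on $B/I$ is already determined by its restriction to $I$, so $G$ is identified with the group of $\beta\in\operatorname{Aut}(\mathbb{K}\otimes\mathcal{O}_\infty)$ whose induced corona automorphism fixes, pointwise, the Busby image of $\mathcal{O}_{n+1}$ in $M(I)/I$. Its weak contractibility is the crux of the whole theorem, and is where I expect essentially all the difficulty to sit: a based $k$-sphere in $G$ is a $C(S^k)$-linear automorphism of $C(S^k)\otimes\mathbb{K}\otimes\mathcal{O}_\infty$, trivial at the basepoint, whose corona automorphism fixes the Busby copy of $C(S^k)\otimes\mathcal{O}_{n+1}$; a parametrised $KK$-theoretic uniqueness/absorption argument in the spirit of Elliott--Kucerovsky and of Dadarlat and Dadarlat--Pennig should show that such an automorphism is connected to the identity, through automorphisms of the same type, by a path of unitaries in the multiplier algebra, which contracts the sphere inside $G$. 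Everything after this point is formal.

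For 2), part 1) makes $\operatorname{BAut}(E_{n+1}\otimes\mathcal{O}_\infty)\to\operatorname{BAut}(\mathcal{O}_{n+1})$ a weak equivalence, hence a bijection on $[X,-]$ since $X$ is a finite CW-complex. I would lift the classifying map of $\mathcal{O}$ to a principal $\operatorname{Aut}(E_{n+1}\otimes\mathcal{O}_\infty)$-bundle $\mathcal{Q}$, unique up to isomorphism, and set $\mathcal{E}:=\mathcal{Q}\times_{\operatorname{Aut}(E_{n+1}\otimes\mathcal{O}_\infty)}(E_{n+1}\otimes\mathcal{O}_\infty)$ and, using the invariance of $I$ established above, $\mathcal{A}:=\mathcal{Q}\times_{\operatorname{Aut}(E_{n+1}\otimes\mathcal{O}_\infty)}(\mathbb{K}\otimes\mathcal{O}_\infty)$; the fibrewise exact sequences assemble into $0\to\mathcal{A}\to\mathcal{E}\to\mathcal{O}\to 0$ with $\mathcal{E}$ a continuous field of $E_{n+1}\otimes\mathcal{O}_\infty$ and $\mathcal{A}$ one of $\mathbb{K}\otimes\mathcal{O}_\infty$. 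Finally, $[\mathcal{A}]$ depends only on $[\mathcal{O}]$ because $\mathcal{Q}$ is unique up to isomorphism of principal bundles, and $[\mathcal{A}]$ lies in the reduced group $\bar{E}^1_{\mathcal{O}_\infty}(X)$ rather than merely in $E^1_{\mathcal{O}_\infty}(X)$ because $\operatorname{Aut}(E_{n+1}\otimes\mathcal{O}_\infty)$ is path connected (by 1) it has the same $\pi_0$ as $\operatorname{Aut}(\mathcal{O}_{n+1})$, which is trivial), so the restriction homomorphism $\operatorname{Aut}(E_{n+1}\otimes\mathcal{O}_\infty)\to\operatorname{Aut}(\mathbb{K}\otimes\mathcal{O}_\infty)$ has image in the identity component. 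The main obstacle, to repeat, is the weak contractibility of $G$ together with the homotopy lifting property for $q$, both of which rest on parametrised uniqueness theorems for extensions by stable Kirchberg algebras.
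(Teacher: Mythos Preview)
The paper does not prove this theorem: it is quoted from \cite[Cor.~3.15, Def.~4.1]{S} as background material, and the only argument supplied here is the short paragraph after the statement, which explains that 2) follows from 1) via the bijection $(\operatorname{B}(q)_*)^{-1}:[X,\operatorname{BAut}(\mathcal{O}_{n+1})]\to[X,\operatorname{BAut}(E_{n+1}\otimes\mathcal{O}_\infty)]$ together with the restriction homomorphism $\operatorname{Aut}(E_{n+1}\otimes\mathcal{O}_\infty)\to\operatorname{Aut}_0(\mathbb{K}\otimes\mathcal{O}_\infty)$ (citing \cite[Lem.~3.7]{S}). Your deduction of 2) from 1) by the associated-bundle formalism is exactly this, so on that part you and the paper agree.

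For part 1) there is nothing in the present paper to compare against; the proof lives in \cite{S}. Your outline --- show $q$ is a Serre fibration with weakly contractible fibre $G=\ker q$ --- is a plausible architecture, and you are right that the ideal $\mathbb{K}\otimes\mathcal{O}_\infty$ is the unique non-trivial ideal and hence $\operatorname{Aut}$-invariant, and that every automorphism of $\mathcal{O}_{n+1}$ acts trivially on $K$-theory. However, the two load-bearing steps (the homotopy lifting property for $q$, and the weak contractibility of $G$) are not proved in your proposal: phrases like ``a parametrised version of the same argument'' and ``a parametrised $KK$-theoretic uniqueness/absorption argument \ldots\ should show'' flag exactly the places where real work is required, and you have not supplied it. In particular, establishing that $q$ is a genuine Serre fibration (as opposed to merely surjective on $\pi_*$) is delicate for automorphism groups of $C^*$-algebras, and it is quite possible that the argument in \cite{S} bypasses the fibration formalism entirely, instead computing $\pi_k$ on both sides separately (both are known to be $\mathbb{Z}_n$ in odd degrees and $0$ in even degrees) and then checking directly that $q_*$ is an isomorphism. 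Without consulting \cite{S} you cannot know which route is taken, but as written your proposal is a strategy sketch rather than a proof.
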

One has a bijection $(\operatorname{B}(q)_*)^{-1} : [X, \operatorname{BAut}(\mathcal{O}_{n+1})]\ni [\mathcal{O}]\mapsto [\mathcal{E}]\in [X, \operatorname{BAut}(E_{n+1}\otimes\mathcal{O}_\infty)]$ by Theorem \ref{b}, 1),
and the group homomorphism $\operatorname{Aut}(E_{n+1}\otimes\mathcal{O}_\infty)\ni\alpha\mapsto \alpha |_{\mathbb{K}\otimes\mathcal{O}_\infty}\in \operatorname{Aut}_0(\mathbb{K}\otimes\mathcal{O}_\infty)$ induces the map $[X, \operatorname{BAut}(E_{n+1}\otimes\mathcal{O}_\infty)]\ni [\mathcal{E}]\mapsto [\mathcal{A}]\in \bar{E}^1_{\mathcal{O}_\infty}(X)$ (see \cite[{Lem. 3.7}]{S}).

Since $E_{n+1}\otimes\mathcal{O}_\infty$ is KK-equivalent to $\mathbb{C}$,
the map ${\rm id}_{\mathcal{A}^{-1}}\otimes 1 : \mathcal{A}^{-1}\ni f\mapsto f\otimes 1_\mathcal{E}\in \mathcal{A}^{-1}\otimes_{C(X)}\mathcal{E}$ gives the isomorphism $K_0({\rm id}_{\mathcal{A}^{-1}}\otimes 1)$ of their  $K_0$-groups by \cite[{Th. 1.1}]{D3}.

%
\begin{lem}\label{key}
Let $(X, x_0)$ be a pointed, path connected, finite CW-complex.
Let $\mathcal{A}$ and $\mathcal{E}$ be as in Theorem \ref{b}, 2),
and let $\iota : \mathcal{A}\to \mathcal{E}$ be the inclusion map.
We fix isomorphisms $\varphi : \mathcal{E}_{x_0}\cong E_{n+1}\otimes\mathcal{O}_\infty$ and $\pi : (\mathcal{A}^{-1})_{x_0}\cong \mathbb{K}\otimes\mathcal{O}_\infty$.
Then,
there exists a properly infinite full projection $p\in\mathcal{A}^{-1}$ satisfying $K_0(\pi\circ ev_x)([p]_0)=n\in K_0(\mathbb{K}\otimes\mathcal{O}_\infty)$ and
$$[1_{p(\mathcal{A}^{-1})p\otimes_{C(X)}\mathcal{E}}]_0\in \operatorname{Im}(K_0(p(\mathcal{A}^{-1})p\otimes_{C(X)}\mathcal{A})\xrightarrow{K_0({\rm id}\otimes\iota)} K_0(p(\mathcal{A}^{-1})p\otimes_{C(X)}\mathcal{E})).$$
\end{lem}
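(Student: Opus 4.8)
The plan is to produce the projection $p$ by first choosing any properly infinite full projection in $\mathcal{A}^{-1}$ with the right $K_0$-class at the basepoint, and then correcting it, using Lemma \ref{g}, so that the $K_0$-membership condition holds. Let me describe the two stages.

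\textbf{Stage 1: getting the $K_0$-equation right.} The key computation is in $K_0(\mathcal{A}^{-1}\otimes_{C(X)}\mathcal{E})$. Since $E_{n+1}\otimes\mathcal{O}_\infty$ is KK-equivalent to $\mathbb{C}$, the map $K_0({\rm id}_{\mathcal{A}^{-1}}\otimes 1) : K_0(\mathcal{A}^{-1})\to K_0(\mathcal{A}^{-1}\otimes_{C(X)}\mathcal{E})$ is an isomorphism (as recalled just before the lemma, via \cite[{Th. 1.1}]{D3}). On the other hand, from the exact sequence $0\to\mathcal{A}\to\mathcal{E}\to\mathcal{O}\to 0$ tensored with $\mathcal{A}^{-1}$ over $C(X)$, one gets a six-term exact sequence relating $K_*(\mathcal{A}^{-1}\otimes_{C(X)}\mathcal{A})$, $K_*(\mathcal{A}^{-1}\otimes_{C(X)}\mathcal{E})$, and $K_*(\mathcal{A}^{-1}\otimes_{C(X)}\mathcal{O})$; here $\mathcal{A}^{-1}\otimes_{C(X)}\mathcal{A}\cong C(X)\otimes\mathbb{K}\otimes\mathcal{O}_\infty$ so its $K_0$ is $K^0(X)$, and $\mathcal{A}^{-1}\otimes_{C(X)}\mathcal{O}$ is a continuous field of $\mathcal{O}_\infty\otimes\mathcal{O}_{n+1}\cong\mathcal{O}_{n+1}$ whose $K_*$ one computes (fiberwise $K_0=\mathbb{Z}_n$, $K_1=0$) — more precisely, the map $K_0({\rm id}\otimes\iota) : K^0(X)\to K_0(\mathcal{A}^{-1}\otimes_{C(X)}\mathcal{E})$ has cokernel identified with $K_0$ of the quotient field, and the claim is that $1_{\mathcal{A}^{-1}\otimes_{C(X)}\mathcal{E}}$ — which corresponds under $K_0({\rm id}_{\mathcal{A}^{-1}}\otimes 1)^{-1}$ to some $g\in K_0(\mathcal{A}^{-1})$ — lies in the image of multiplication by $n$ on $K^0(X)$ after the identifications. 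The point is that the fiberwise picture is exactly $\mathbb{Z}\xrightarrow{\times n}\mathbb{Z}\to\mathbb{Z}_n$ (the computation $K_0(\mathbb{K})=\mathbb{Z}\xrightarrow{-n}K_0(E_{n+1})=\mathbb{Z}$ recalled in the preliminaries, tensored with the KK-trivial $\mathcal{O}_\infty$), and the continuous-field version says the unit of $\mathcal{A}^{-1}\otimes_{C(X)}\mathcal{E}$ is $n$ times the class of a rank-one type projection. Concretely I expect to show $[1_{\mathcal{A}^{-1}\otimes_{C(X)}\mathcal{E}}]_0 = n\cdot[\,\mathrm{id}_{\mathcal{A}^{-1}}\otimes 1\text{ applied to }\pi^{-1}(e\otimes 1_{\mathcal{O}_\infty})\text{-type class}\,]$; since the standard minimal projection in $\mathcal{A}^{-1}$ (coming from $\mathbb{K}\otimes\mathcal{O}_\infty$) has basepoint $K_0$-class sent to $1\in\mathbb{Z}$, scaling by $n$ and passing to a properly infinite full projection $p$ Murray--von Neumann equivalent to a sum of $n$ copies gives $K_0(\pi\circ ev_{x_0})([p]_0)=n$ and $[1_{p(\mathcal{A}^{-1})p\otimes_{C(X)}\mathcal{E}}]_0=[p]_0\cdot 1$ lands in the image of $K_0({\rm id}\otimes\iota)$ at the level of $K^0(X)$-multiples.

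\textbf{Stage 2: fixing the full/properly-infinite projection and the module twist.} Having a candidate $p_0\in\mathcal{A}^{-1}$ with $K_0(\pi\circ ev_{x_0})([p_0]_0)=n$, the membership condition $[1_{p_0(\mathcal{A}^{-1})p_0\otimes_{C(X)}\mathcal{E}}]_0\in\operatorname{Im}(K_0({\rm id}\otimes\iota))$ need not hold on the nose, only up to an element of $K^0(X)^\times$, equivalently (since $X$ is connected and the basepoint value of $[p_0]_0$ is fixed to be $n$) up to an element of $1+\tilde{K}^0(X)$. Here Lemma \ref{g} enters: given any $a\in K^0(X)^\times$ there is $\alpha\in\operatorname{Aut}_X(\mathcal{A}^{-1})$ with $K_0(\alpha)=\cdot a$; replacing $p_0$ by $\alpha(p_0)$ (still properly infinite, full, same basepoint class since $a\in\pm1+\tilde K^0(X)$ evaluates to $\pm1$, and we pick the sign $+$) multiplies the relevant $K_0$-classes by $a$ and lets us absorb the discrepancy so that the unit of the cut-down becomes exactly an element of the image. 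That $p_0$ can be taken properly infinite and full from the start is standard for unital properly infinite $C(X)$-algebras ($\mathcal{A}^{-1}$ being a continuous field of the stably-finite-free, purely-infinite-in-the-relevant-sense $\mathbb{K}\otimes\mathcal{O}_\infty$ — indeed $\mathbb{K}\otimes\mathcal{O}_\infty$ absorbs $\mathcal{O}_\infty$): any full projection with the prescribed $K_0$-datum can be chosen properly infinite because the fibers are properly infinite and $X$ is a finite complex, so one builds $p_0$ by a partition-of-unity/local-triviality argument.

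\textbf{The main obstacle} I anticipate is Stage 1 — pinning down precisely which element of $K_0(\mathcal{A}^{-1}\otimes_{C(X)}\mathcal{E})$ the unit $[1]_0$ is and showing it is "$n$ times" an element of the image of $K_0({\rm id}\otimes\iota)$ in the correct module-theoretic sense. The naturality of the six-term sequence in $K$-theory for the $C(X)$-linear exact sequence, combined with the KK-triviality of $E_{n+1}\otimes\mathcal{O}_\infty$ and the explicit fiberwise map $\mathbb{Z}\xrightarrow{\times n}\mathbb{Z}$, should force this, but making the identification $C(X)$-linearly (as $K^0(X)$-modules, using the module structure from Section 2.2 and \cite{D3}) without an ad hoc index computation is the delicate point; once that is in place, Stage 2 is a routine application of Lemma \ref{g} and standard properly-infinite-projection manipulations.
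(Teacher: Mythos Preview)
Your two–stage strategy has a genuine gap in Stage 2, and the paper's proof avoids it entirely by a different (and simpler) maneuver.

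The paper never starts from a naive $p_0$ and then corrects. Instead it \emph{defines} $[p]_0\in K_0(\mathcal{A}^{-1})$ directly by the requirement: take the rank-one class $[\psi(1_{C(X)}\otimes e\otimes 1_{\mathcal{O}_\infty})]_0\in K_0(\mathcal{A}^{-1}\otimes_{C(X)}\mathcal{A})\cong K^0(X)$, push it forward by $K_0({\rm id}\otimes\iota)$, and pull back along the isomorphism $K_0({\rm id}_{\mathcal{A}^{-1}}\otimes 1)$; the result is declared to be $-[p]_0$. The existence of a properly infinite full projection realizing this class uses only that $\mathcal{A}^{-1}\cong D\otimes\mathbb{K}$ with $D$ unital $\mathcal{O}_\infty$-stable. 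No six-term computation and no use of Lemma~\ref{g} are needed.

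Your Stage~2 asserts that the membership condition for the naive $p_0$ (taken as $n$ copies of a ``minimal'' projection, so $[p_0]_0=n[q]_0$) fails ``only up to an element of $1+\tilde K^0(X)$'', and that twisting by an automorphism from Lemma~\ref{g} fixes this. That assertion is false in general. Take $X=S^2$, so $K^0(X)=\mathbb{Z}[t]/(t^2)$ and $\mathcal{A}^{-1}$ is trivial; then $K_0(\mathcal{A}^{-1})=K^0(X)$ with generator $[q]_0=1$, and the composite $K^0(X)\xrightarrow{K_0({\rm id}\otimes\iota)}K_0(\mathcal{A}^{-1}\otimes_{C(X)}\mathcal{E})\cong K_0(\mathcal{A}^{-1})$ is multiplication by some $m=-n+kt$. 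For $k\not\equiv 0\pmod n$ your class $[p_0]_0=n$ is \emph{not} in $mK^0(X)$, and neither is $an$ for any unit $a=1+ct$: one would need $an=(-n+kt)(a'+b't)$, forcing $a'=-1$ and then $n\mid k$. The correct class produced by the paper is $[p]_0=n-kt$, which lies in the image (it is $m\cdot(-1)$) but is not of the form $(1+ct)\cdot n$. So Lemma~\ref{g} cannot bridge the gap: automorphisms of $\mathcal{A}^{-1}$ only move $K_0$-classes within a $(1+\tilde K^0(X))$-orbit, and the target class is in a different orbit.

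A minor additional point: $\mathcal{A}^{-1}\otimes_{C(X)}\mathcal{E}$ is not unital ($\mathcal{A}^{-1}$ has fiber $\mathbb{K}\otimes\mathcal{O}_\infty$), so statements about ``$[1_{\mathcal{A}^{-1}\otimes_{C(X)}\mathcal{E}}]_0$'' in your Stage~1 need to be rephrased in terms of $[p\otimes 1_\mathcal{E}]_0$ for a chosen $p$ --- which brings you right back to the question of which $p$ to choose. The fix is exactly what the paper does: let the map $K_0({\rm id}\otimes\iota)$ itself dictate $[p]_0$, rather than guessing $p$ first.
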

\begin{proof}
Fix a $C(X)$-linear isomorphism $\psi : C(X)\otimes\mathbb{K}\otimes\mathcal{O}_\infty\to \mathcal{A}^{-1}\otimes_{C(X)}\mathcal{A}$ with $$K_0((\pi\otimes\varphi |_{\mathcal{A}_{x_0}})\circ ev_x)([\psi(1_{C(X)}\otimes e\otimes 1_{\mathcal{O}_\infty})]_0)=1\in K_0((\mathbb{K}\otimes\mathcal{O}_\infty)^{\otimes 2}).$$
By \cite[{Th. 4.2}]{S} and \cite[{Th. 2.11}]{DP2},
the algebra $\mathcal{A}^{-1}$ is isomorphic to a tensor product of a unital $\mathcal{O}_\infty$-stable algebra and $\mathbb{K}$.
Therefore,
one can find a properly infinite full projection $p\in \mathcal{A}^{-1}$ such that $-[p]_0\in K_0(\mathcal{A}^{-1})$ is sent to $K_0({\rm id}_{\mathcal{A}^{-1}}\otimes\iota)([\psi(1_{C(X)}\otimes e\otimes 1_{\mathcal{O}_\infty})]_0)\in K_0(\mathcal{A}^{-1}\otimes_{C(X)}\mathcal{E})$ by the isomorphism $K_0({\rm id}_{\mathcal{A}^{-1}}\otimes 1)$.
Thanks to \cite[{Th. 1.1}]{D3},
two inclusion maps $p(\mathcal{A}^{-1})p\otimes_{C(X)}\mathcal{A}\hookrightarrow \mathcal{A}^{-1}\otimes_{C(X)} \mathcal{A}$ and $p(\mathcal{A}^{-1})p\otimes_{C(X)}\mathcal{E}\hookrightarrow\mathcal{A}^{-1}\otimes_{C(X)}\mathcal{E}$ give isomorphisms of K-groups,
and the statement is now proved.
\end{proof}
\begin{rem}\label{m}
Since $K_0(\mathcal{E})=K^0(X)$,
the image of the $K^0(X)$-module homomorphism $K_0(\mathcal{A})\xrightarrow{K_0(\iota)}K_0(\mathcal{E})$ is an ideal of $K^0(X)$.
In the case of $\operatorname{Tor}(H^*(X), \mathbb{Z}_n)=0$, the ideal is the complete invariant of the continuous field $\mathcal{E}/\mathcal{A}$ (see \cite[{Sec. 2}]{D1} and \cite[{Sec. 4}]{S}).
The element $-[p]_0\in K_0(\mathcal{A}^{-1})$ corresponds to $KK_X(\iota)\in KK_X(\mathcal{A}, \mathcal{E})\cong K_0(\mathcal{A}^{-1})$,
and one can identify $K_0(\iota)$ with the map $K_0(\mathcal{A})\ni [r]_0\mapsto -[r\otimes p]_0\in K_0(\mathcal{A}\otimes_{C(X)}\mathcal{A}^{-1})\cong K^0(X)$.
\end{rem}
By Lemma \ref{key}, we constructs an element $[p(\mathcal{A}^{-1})p]\in [X, \operatorname{BAut}(\mathbb{M}_n(\mathcal{O}_\infty))]$ from $[\mathcal{O}]\in [X, \operatorname{BAut}(\mathcal{O}_{n+1})]$.
In Section 3,
we verify that this gives a natural transformation between two functors $[\cdot, \operatorname{BAut}(\mathcal{O}_{n+1})]$ and $[\cdot, \operatorname{BAut}(\mathbb{M}_n(\mathcal{O}_\infty))]$ defined on the category of the pointed connected finite CW-complexes.
\subsection{The homotopy sets $[X, \operatorname{Aut}(\mathcal{O}_{n+1})]$ and $[X, \operatorname{Aut}(\mathbb{M}_n(\mathcal{O}_\infty))]$}
We briefly recall \cite[{Th. 5.9}]{D4}.
Let $B$ be a unital Kirchberg algebra with path connected $\operatorname{Aut}(B)$,
and let $C_\nu :=\{ f\in C[0, 1]\otimes B | f(0)=0, f(1)\in \mathbb{C}1_B\}$ be the mapping cone of the unital map $\nu : \mathbb{C}\to B$ with the inclusion map $j : SB:= C_0(0, 1)\otimes B\to C_\nu B$.
Let $\alpha, l : (X, x_0)\to (\operatorname{Aut}(B), {\rm id}_B)$ be the base point preserving continuous maps where $l : x\mapsto {\rm id}_B$ is the constant map.
These two maps define an element $\langle \alpha, l\rangle\in KK(C_\nu B, SC_0(X, x_0)\otimes B)$ (see \cite[{p 123}]{D4}),
and the map $[X, \operatorname{Aut}(B)]\ni [\alpha]\mapsto \langle \alpha, l\rangle\in KK(C_\nu, SC_0(X, x_0)\otimes B)$ is bijective.

In the case of $B=\mathbb{M}_n(\mathcal{O}_\infty)$,
one has $KK(C_\nu B, SC_0(X, x_0)\otimes B)=KK(C_\nu B, SC(X)\otimes B)$,
and the map $j^* : KK(C_\nu B, SC(X)\otimes B)\to KK(SB, SC(X)\otimes B)$ maps $\langle \alpha, l\rangle$ to $KK({\rm id}_{C_0(0, 1)}\otimes\alpha )-KK({\rm id}_{C_0(0, 1)}\otimes l)=S(\eta_*([\alpha])-1)$.
Here, the map $S : K^0(X)=KK(B, C(X)\otimes B)\to KK(SB, SC(X)\otimes B)$ is the suspension isomorphism (see Theorem \ref{DP} for the definition of $\eta_*$).
For another map $\beta : (X, x_0)\to (\operatorname{Aut}(B), {\rm id}_B)$,
one has 
\begin{eqnarray*}
\langle \alpha\circ \beta, l\rangle &=&\langle \alpha\circ \beta, \alpha\rangle +\langle \alpha, l\rangle\\
&=&({\rm id}_{C_0(\mathbb{R})}\otimes\alpha)_*(\langle \beta, l\rangle)+\langle\alpha, l\rangle\\
&=&\langle \beta, l\rangle\otimes (KK({\rm id}_{C_0(0, 1)}\otimes\alpha)-1)+\langle \alpha, l\rangle +\langle \beta, l\rangle\\
&=&\langle \alpha, l\rangle+\langle \beta, l\rangle+\langle \beta, l\rangle\cdot (S^{-1}\circ j^*)(\langle\alpha, l\rangle).
\end{eqnarray*}
The product$\langle \beta, l\rangle\cdot (S^{-1}\circ j^*)(\langle \alpha, l\rangle)$ is defined by 
$$KK(C_\nu B, SC(X)\otimes B)\times KK(B, C(X)\otimes B)\to KK(C_\nu B, SC(X\times X)\otimes B)$$
$$\xrightarrow{{\Delta_X}_*} KK(C_\nu B, SC(X)\otimes B).$$ 
\begin{thm}[{\cite[{Th. 6.3, Th. 5.9}]{D4}}]\label{dp}
We write $\operatorname{Ad} : U(C(X)\otimes\mathbb{M}_n(\mathcal{O}_\infty))\ni v\mapsto {\rm Ad}v \in \operatorname{Map}(X, \operatorname{Aut}(\mathbb{M}_n(\mathcal{O}_\infty)))$.

1) There is a short exact sequence of groups $\colon$
$$0\to K_1(C(X)\otimes\mathbb{M}_n(\mathcal{O}_\infty))\otimes\mathbb{Z}_n\xrightarrow{Ad} [X, \operatorname{Aut}(\mathbb{M}_n(\mathcal{O}_\infty))]\xrightarrow{\eta_*}(1+\operatorname{Tor}(K^0(X), \mathbb{Z}_n))^\times\to 0.$$

2)  The multiplication $a\star b := a+b+b\cdot (S^{-1}\circ j^*)(a)$ makes $(KK(C_\nu\mathbb{M}_n(\mathcal{O}_\infty),  SC(X)\otimes \mathbb{M}_n(\mathcal{O}_\infty)), \star)$ a group with the following isomorphism
$$[X, \operatorname{Aut}(\mathbb{M}_n(\mathcal{O}_\infty))]\ni [\beta] \mapsto \langle \beta, l\rangle\in KK(C_\nu \mathbb{M}_n(\mathcal{O}_\infty), SC(X)\otimes\mathbb{M}_n(\mathcal{O}_\infty)).$$
\end{thm}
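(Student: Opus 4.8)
The plan is to reduce everything to the description, recalled in the paragraphs just before the statement, of $[X,\operatorname{Aut}(B)]$ (write $B=\mathbb{M}_n(\mathcal{O}_\infty)$, $Y=SC(X)\otimes B$) as $KK(C_\nu B,Y)$ with the group law induced by composition of automorphisms, together with the six-term exact sequence attached to the mapping cone of the unital inclusion $\nu:\mathbb{C}\to B$, i.e. $0\to SB\xrightarrow{j}C_\nu B\xrightarrow{ev_1}\mathbb{C}\to 0$. The only algebra-specific input is the $K$-theory of $\mathbb{M}_n(\mathcal{O}_\infty)$: $K_0(B)=\mathbb{Z}$, $K_1(B)=0$, and $[1_B]_0$ is $n$ times a generator, so that $\nu_*$ is multiplication by $n$ on $K_0$; equivalently $B\sim_{KK}\mathbb{C}$ with $[\nu]=n\in KK(\mathbb{C},\mathbb{C})=\mathbb{Z}$ and $K_*(C_\nu B)=(0,\mathbb{Z}_n)$. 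I would take from \cite{D4} the bijection $[\beta]\mapsto\langle\beta,l\rangle$ itself and the two compatibilities of this bijection with the maps $\operatorname{Ad}$ and $\eta_*$.

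For part 2) there is essentially nothing to do beyond the text: the constant map $l$ gives $\langle l,l\rangle=0$, and the computation of $\langle\alpha\circ\beta,l\rangle$ displayed just above the statement says precisely $\langle\alpha\circ\beta,l\rangle=\langle\alpha,l\rangle+\langle\beta,l\rangle+\langle\beta,l\rangle\cdot(S^{-1}\circ j^*)(\langle\alpha,l\rangle)=\langle\alpha,l\rangle\star\langle\beta,l\rangle$. Hence the bijection $[\beta]\mapsto\langle\beta,l\rangle$ carries composition to $\star$; since $[X,\operatorname{Aut}(B)]$ is a group, $(KK(C_\nu B,Y),\star)$ is a group with identity $0$, and the bijection is a group isomorphism.

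For part 1), first $\eta$ is a group homomorphism, so $\eta_*$ is one, with target $K^0(X)^\times$ by Theorem \ref{DP}. The text records $j^*(\langle\alpha,l\rangle)=S(\eta_*([\alpha])-1)$, i.e. $\eta_*([\alpha])=1+(S^{-1}\circ j^*)(\langle\alpha,l\rangle)$; under the same dictionary one has (this is the point to be imported from \cite{D4}) that $\operatorname{Ad}v$ corresponds to $(ev_1)^*([v]_1)$ with $[v]_1\in K_1(C(X)\otimes B)=KK(\mathbb{C},Y)$. Now apply $KK(-,Y)$ to the mapping-cone extension and use $B\sim_{KK}\mathbb{C}$ to identify $KK(\mathbb{C},Y)=K_1(C(X)\otimes B)=K^1(X)$, $KK(SB,Y)\cong K^0(X)$ via $S$, $KK^1(\mathbb{C},Y)\cong K^0(X)$, $KK^1(SB,Y)\cong K^1(X)$; since $[\nu]=n$, each connecting map in the six-term sequence is multiplication by $n$. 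Exactness gives $\operatorname{im}(S^{-1}\circ j^*)=\ker(n:K^0(X)\to K^0(X))=\operatorname{Tor}(K^0(X),\mathbb{Z}_n)$, $\ker(j^*)=\operatorname{im}((ev_1)^*)$, and $\ker((ev_1)^*)=nK^1(X)$. Translating through the isomorphism of part 2) and the two compatibilities: $\eta_*$ maps onto $1+\operatorname{Tor}(K^0(X),\mathbb{Z}_n)$, which equals $(1+\operatorname{Tor}(K^0(X),\mathbb{Z}_n))^\times$ because $\tilde K^0(X)$ is a nilpotent ideal for finite $X$ (so $1+t$ is invertible with inverse again of that form, and $n$-torsion of this form when $t$ is); $\operatorname{Ad}$ factors through an injection of $K_1(C(X)\otimes B)\otimes\mathbb{Z}_n=K^1(X)/nK^1(X)$; and $\ker(\eta_*)=\operatorname{im}(\operatorname{Ad})$. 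That is exactly the asserted short exact sequence of groups.

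The main obstacle is not the six-term bookkeeping but the two compatibilities of the bijection $[\beta]\mapsto\langle\beta,l\rangle$ with $\operatorname{Ad}$ and with $\eta_*$: the identity $j^*(\langle\alpha,l\rangle)=S(\eta_*([\alpha])-1)$ is quoted in the text, but $\langle\operatorname{Ad}v,l\rangle=(ev_1)^*([v]_1)$ must be verified by unwinding the definition of the relative pairing $\langle\cdot,l\rangle$ from \cite{D4}. A secondary, purely cosmetic subtlety is pinning down the signs so that all four connecting maps are literally multiplication by $n$ rather than $\pm n$ — irrelevant here since $\operatorname{im}$ and $\ker$ of $\pm n$ coincide.
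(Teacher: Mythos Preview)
The paper does not give its own proof of this theorem: it is quoted from \cite{D4}, and the only argument supplied in the present paper is the displayed computation of $\langle\alpha\circ\beta,l\rangle$ in the paragraph preceding the statement, which is exactly what you use for part~2). So for part~2) your proposal and the paper's treatment coincide verbatim.

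For part~1) there is nothing in the paper to compare against; your sketch is the standard route from \cite[Th.~6.3]{D4}: apply $KK(-,SC(X)\otimes B)$ to the mapping-cone extension $SB\xrightarrow{j}C_\nu B\xrightarrow{ev_1}\mathbb{C}$, use the $KK$-equivalence $B\sim_{KK}\mathbb{C}$ with $[\nu]=n$ to identify the connecting maps with multiplication by $n$, and read off the image of $S^{-1}\circ j^*$ as $\operatorname{Tor}(K^0(X),\mathbb{Z}_n)$ and the kernel of $(ev_1)^*$ as $nK^1(X)$. This is correct, and your translation back through the bijection $[\beta]\mapsto\langle\beta,l\rangle$ to obtain the exact sequence is the intended one. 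Your honest flagging of the one genuine input to be imported from \cite{D4}, namely $\langle\operatorname{Ad}v,l\rangle=(ev_1)^*([v]_1)$, is exactly right; once that is granted the rest is bookkeeping as you say.
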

M. Izumi obtained similar results for $[X, \operatorname{Aut}(\mathcal{O}_{n+1})]$.
Let $\delta : K_1(C(X)\otimes \mathcal{O}_{n+1})\to K^0(X)$ be the index map coming from the exact sequence 
$C(X)\otimes\mathbb{K}\to C(X)\otimes E_{n+1}\to C(X)\otimes\mathcal{O}_{n+1}.$
The map $u : \operatorname{Aut}(\mathcal{O}_{n+1})\to U(\mathcal{O}_{n+1})$ defined by $u(\alpha):=\sum_{i=1}^{n+1}\alpha(S_i)S^*_i$ is a weak homotopy equivalence (see \cite{D2}),
where the isometries $S_i$ are the canonical generators of $\mathcal{O}_{n+1}$.
\begin{thm}[{\cite[{Th. 3.1}]{IS}}]\label{is}
We define a multiplication of $K_1(C(X)\otimes \mathcal{O}_{n+1})$ by $a\diamond b:=a+b-a\cdot \delta (b)$.

1) The map $u_*  : [X, \operatorname{Aut}(\mathcal{O}_{n+1})]\to (K_1(C(X)\otimes\mathcal{O}_{n+1}), \diamond)$ is a group isomorphism.

2) There is a short exact sequence of groups
$$0\to K^1(X)\otimes\mathbb{Z}_n\to [X, \operatorname{Aut}(\mathcal{O}_{n+1})]\xrightarrow{1-\delta} (1+\operatorname{Tor}(K^0(X), \mathbb{Z}_n))^\times\to 0$$
where 
the map $1-\delta$ is defined by $(1-\delta)([\alpha]):=1-\delta([u(\alpha)]_1)$.
\end{thm}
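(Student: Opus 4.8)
The plan is to establish part 1) and then deduce part 2) essentially formally. For part 1), I would begin from the fact, recalled from \cite{D2}, that $u$ is a weak homotopy equivalence; since $X$ is a finite CW-complex this at once yields a bijection of sets $u_*\colon [X,\operatorname{Aut}(\mathcal{O}_{n+1})]\to[X,U(\mathcal{O}_{n+1})]$. I would then identify the target: $[X,U(\mathcal{O}_{n+1})]=\pi_0(U(C(X)\otimes\mathcal{O}_{n+1}))$, and because $C(X)\otimes\mathcal{O}_{n+1}$ is unital, properly infinite and $\mathcal{O}_\infty$-absorbing, the canonical map $\pi_0(U(C(X)\otimes\mathcal{O}_{n+1}))\to K_1(C(X)\otimes\mathcal{O}_{n+1})$ is a bijection ($K_1$-surjectivity from proper infiniteness, by absorbing a matrix amplification into a full corner; $K_1$-injectivity from $\mathcal{O}_\infty$-absorption). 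So $u_*$ becomes a bijection onto $K_1(C(X)\otimes\mathcal{O}_{n+1})$, and everything reduces to checking it turns composition into $\diamond$.

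For the multiplicativity I would use that every $\alpha\in\operatorname{Aut}(\mathcal{O}_{n+1})$ is the second quantization $\rho_{u(\alpha)}$ determined by $\rho_v(S_i)=vS_i$, whose composition rule $\rho_v\circ\rho_w=\rho_{\rho_v(w)v}$ gives, for families $\alpha,\beta\colon X\to\operatorname{Aut}(\mathcal{O}_{n+1})$ inducing $C(X)$-linear automorphisms $\tilde\alpha,\tilde\beta$ of $C(X)\otimes\mathcal{O}_{n+1}$, the identity $u(\alpha\circ\beta)=\tilde\alpha(u(\beta))\,u(\alpha)$. Passing to $K_1$, this reads $u_*([\alpha\circ\beta])=K_1(\tilde\alpha)(u_*[\beta])+u_*[\alpha]$, so part 1) is equivalent to the formula
$$K_1(\tilde\alpha)(b)=b-u_*([\alpha])\cdot\delta(b),\qquad b\in K_1(C(X)\otimes\mathcal{O}_{n+1}),$$
where $\cdot$ is the $K^0(X)=K_0(C(X)\otimes E_{n+1})$-module structure on $K_1(C(X)\otimes\mathcal{O}_{n+1})$. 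To prove it I would lift left multiplication by $u(\alpha)$ to an isometry $V$ over $C(X)\otimes E_{n+1}$, producing a unital endomorphism $\rho_V$ of $C(X)\otimes E_{n+1}$ that preserves $C(X)\otimes\mathbb{K}$ and descends to $\tilde\alpha$; one computes that $\rho_V$ sends the fibrewise minimal projection to one of class $1-\delta(u_*[\alpha])\in K^0(X)^\times$, so that $K_0(\rho_V|_{C(X)\otimes\mathbb{K}})$ is multiplication by $1-\delta(u_*[\alpha])$, whereas $K_1(\rho_V)$ on $K_1(C(X)\otimes E_{n+1})$ is the identity because its defect class, being $n$-torsion, is annihilated by the multiplication-by-$(-n)$ map $K_0(C(X)\otimes\mathbb{K})\to K_0(C(X)\otimes E_{n+1})$. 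Feeding this into the naturality of the six-term sequence of $0\to C(X)\otimes\mathbb{K}\to C(X)\otimes E_{n+1}\to C(X)\otimes\mathcal{O}_{n+1}\to 0$, together with the fact that the index map $\delta$ is a $K^0(X)$-module homomorphism, determines $K_1(\tilde\alpha)$ on $\ker\delta$ and modulo $\ker\delta$; a final comparison — using that $K_1(\tilde\alpha)$ respects the module structure and is compatible with composition of families — removes the residual $\operatorname{Hom}(\operatorname{Tor}(K^0(X),\mathbb{Z}_n),K^1(X)\otimes\mathbb{Z}_n)$-ambiguity. This genuine computation of the module-twisted action $K_1(\tilde\alpha)$ is the main obstacle; an alternative, following the mapping-cone picture used for $\mathbb{M}_n(\mathcal{O}_\infty)$ in Theorem \ref{dp}, would be to describe $[\alpha]$ by a class in $KK(C_\nu\mathcal{O}_{n+1},SC(X)\otimes\mathcal{O}_{n+1})$ and read off the multiplication from there.

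Finally, part 2) follows formally. Transport $1-\delta$ through $u_*$; since $\delta$ is a $K^0(X)$-module map, $1-\delta(a\diamond b)=1-\delta(a)-\delta(b)+\delta(a)\delta(b)=(1-\delta(a))(1-\delta(b))$, so $1-\delta$ is a group homomorphism. By the Cuntz–Toeplitz six-term sequence and the fact that $\mathbb{K}\to E_{n+1}$ is multiplication by $-n$ on $K_0$, the image of $\delta$ is exactly $\operatorname{Tor}(K^0(X),\mathbb{Z}_n)=\{x\in K^0(X):nx=0\}$, hence the image of $1-\delta$ is $1+\operatorname{Tor}(K^0(X),\mathbb{Z}_n)$; its elements are units of $K^0(X)$ because $\operatorname{Tor}(K^0(X),\mathbb{Z}_n)\subset\tilde{K}^0(X)$ is a nilpotent ideal for the finite CW-complex $X$, so this image is precisely $(1+\operatorname{Tor}(K^0(X),\mathbb{Z}_n))^\times$. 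The kernel of $1-\delta$ is $\ker\delta$, which the same sequence identifies with $K^1(X)/nK^1(X)=K^1(X)\otimes\mathbb{Z}_n$, and on $\ker\delta$ the product $\diamond$ restricts to $+$ (since $\delta(b)=0$ there); this gives the asserted short exact sequence of groups.
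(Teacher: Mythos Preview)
The paper does not prove this theorem at all: it is stated with the attribution \cite[{Th.~3.1}]{IS} and no argument is given in the text. So there is no ``paper's own proof'' to compare against; the result is imported from the Izumi--Sogabe paper and used as a black box.

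That said, your outline is broadly the right shape for how the result is actually proved in \cite{IS}: the bijectivity comes from the weak homotopy equivalence $u$ together with $K_1$-injectivity/surjectivity for $C(X)\otimes\mathcal{O}_{n+1}$, and part~2) is indeed a formal consequence of part~1) via the six-term sequence for $\mathbb{K}\to E_{n+1}\to\mathcal{O}_{n+1}$ tensored with $C(X)$. The one genuine soft spot in your sketch is the step where you ``remove the residual $\operatorname{Hom}(\operatorname{Tor}(K^0(X),\mathbb{Z}_n),K^1(X)\otimes\mathbb{Z}_n)$-ambiguity'' by appealing to compatibility with the module structure and with composition. As written this is not an argument: you have computed $K_1(\tilde\alpha)$ on $\ker\delta$ and modulo $\ker\delta$, but the correction term you need to pin down lives exactly in the group you name, and neither module-linearity nor functoriality in $\alpha$ by itself forces it to vanish (both the true formula and a putative corrected one are module maps and compose correctly up to the same kind of indeterminacy). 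In \cite{IS} this is handled by a direct calculation rather than by an abstract uniqueness argument; if you want a self-contained proof you should either carry out that calculation or make precise which additional constraint kills the extension class.
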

Let $\Sigma X$ be the unreduced suspension of $X$.
Then,
one has a bijection $[\Sigma X, \operatorname{B}G]\to [X, G]$ for every path connected group $G$ (see \cite[{Cor. 8.3}]{H}),
where an element $[\alpha]\in [X, G]$ is sent to the principal $G$-bundle whose clutching function is $\alpha : X\to G$.
We denote by
$$\Gamma_\alpha (\Sigma X)_A :=\{(F_1, F_2)\in (C([0, 1]\times X)\otimes A)^{\oplus 2} | F_i(0)\in 1_{C(X)}\otimes A, F_1(1)=\alpha (F_2(1))\in C(X)\otimes A\}$$
the $C(\Sigma X)$-algebra whose isomorphism class in $[\Sigma X, \operatorname{BAut}(A)]$ corresponds to the element $[\alpha]\in [X, \operatorname{Aut}(A)]$.
We use another C*-algebra defined by
$$M_\alpha (X)_A := \{ f\in C([0, 1]\times X)\otimes A | f(0)\in 1_{C(X)}\otimes A, f(1)=\alpha (f(0))\in C(X)\otimes A\}$$
with the unital map
$M_\alpha (X)_A\ni f\mapsto (f(t), f(0))\in \Gamma_\alpha (\Sigma X).$

For every $[\alpha]\in K^1(X)\otimes\mathbb{Z}_n\subset [X, \operatorname{Aut}(\mathcal{O}_{n+1})]$,
we have a unitary $U_\alpha \in U(C(X)\otimes E_{n+1})$ with $\pi (U_\alpha)=u(\alpha) \in C(X)\otimes\mathcal{O}_{n+1}$,
where $\pi : C(X)\otimes E_{n+1}\to C(X)\otimes\mathcal{O}_{n+1}$ is the quotient map.
We need the following lemma in Section 3.
\begin{lem}[{\cite{IP}}]\label{iz}
Let $X$ be a connected finite CW-complex, and let $\alpha, U_\alpha$ be as above.
Let $v\in U(C(X)\otimes\mathbb{M}_n(\mathcal{O}_\infty))$ be a unitary satisfying $[v]_1=[U_\alpha]_1\in K_1(C(X)\otimes E_{n+1}\otimes\mathbb{M}_n(\mathcal{O}_\infty))$.
Then,
we have $$[1_{M_{\alpha\otimes {\rm Ad}v}(X)_{(\mathcal{O}_{n+1}\otimes\mathbb{M}_n(\mathcal{O}_\infty))}}]_0=0\in K_0(M_{\alpha\otimes {\rm Ad}v}(X)_{(\mathcal{O}_{n+1}\otimes\mathbb{M}_n(\mathcal{O}_\infty))}).$$
In particular,
one has $[1_{(\Gamma_\alpha(\Sigma X)_{\mathcal{O}_{n+1}}\otimes_{C(\Sigma X)}\Gamma_{{\rm Ad}v}(\Sigma X)_{\mathbb{M}_n(\mathcal{O}_\infty)})}]_0=0$.
\end{lem}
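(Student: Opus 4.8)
\emph{Proof proposal.} First I reduce the ``in particular'' to the displayed identity. Applying the unital $*$-homomorphism $M_\gamma(X)_A\to\Gamma_\gamma(\Sigma X)_A$ recalled just above with $A=\mathcal{O}_{n+1}\otimes\mathbb{M}_n(\mathcal{O}_\infty)$ and $\gamma=\alpha\otimes{\rm Ad}v$, and using that the $C(\Sigma X)$-tensor product of the associated bundles over $\Sigma X$ with clutching functions $\alpha$ and ${\rm Ad}v$ is the bundle with clutching function $\alpha\otimes{\rm Ad}v$, we obtain a unital map $M_{\alpha\otimes{\rm Ad}v}(X)_A\to\Gamma_\alpha(\Sigma X)_{\mathcal{O}_{n+1}}\otimes_{C(\Sigma X)}\Gamma_{{\rm Ad}v}(\Sigma X)_{\mathbb{M}_n(\mathcal{O}_\infty)}$ carrying $[1]_0$ to $[1]_0$. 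So it suffices to prove $[1_M]_0=0$ for $M:=M_{\alpha\otimes{\rm Ad}v}(X)_A$.

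The algebra $M$ sits in an extension $0\to SC(X)\otimes A\xrightarrow{\iota}M\xrightarrow{\epsilon}A\to 0$, where $\epsilon$ is evaluation at the collapsed endpoint; one may view $M$ as a twisted suspension (the mapping torus of $\alpha\otimes{\rm Ad}v$ pinched at one end). By the K\"unneth theorem $K_0(A)=\mathbb{Z}_n$ and $K_1(A)=0$, and $[1_A]_0=0$, because $[1_{\mathbb{M}_n(\mathcal{O}_\infty)}]_0=n[1_{\mathcal{O}_\infty}]_0$ annihilates the generator $[1_{\mathcal{O}_{n+1}}]_0$ of $\mathbb{Z}_n$; concretely, $W:=\sum_{j=1}^{n+1}S_j\otimes 1_{\mathbb{M}_n(\mathcal{O}_\infty)}\otimes E_{1j}\in M_{n+1}(A)$ is an isometry with $WW^*$ equivalent to $1_A$. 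Since $K_1(A)=0$, the boundary map $K_1(A)\to K_0(SC(X)\otimes A)$ vanishes, so $\iota_*\colon K_1(C(X)\otimes A)\to K_0(M)$ is injective; and since $\epsilon_*[1_M]_0=[1_A]_0=0$, exactness gives $[1_M]_0=\iota_*(\xi)$ for a unique $\xi\in K_1(C(X)\otimes A)$. Everything thus comes down to showing $\xi=0$.

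To compute $\xi$ I would lift the picture along the extension $0\to C(X)\otimes\mathbb{K}\otimes\mathbb{M}_n(\mathcal{O}_\infty)\to C(X)\otimes E_{n+1}\otimes\mathbb{M}_n(\mathcal{O}_\infty)\xrightarrow{\pi}C(X)\otimes A\to 0$. As $E_{n+1}\otimes\mathbb{M}_n(\mathcal{O}_\infty)$ absorbs $\mathcal{O}_\infty$, Theorem \ref{b}, 1) provides a lift of $\alpha\otimes{\rm Ad}v$ to an automorphism of $C(X)\otimes E_{n+1}\otimes\mathbb{M}_n(\mathcal{O}_\infty)$ which, modulo the ideal, is implemented by the unitaries $U_\alpha\otimes 1$ and $1\otimes v$; this yields a companion algebra $\mathcal{E}$ over $E_{n+1}\otimes\mathbb{M}_n(\mathcal{O}_\infty)$ with an ideal whose quotient is $M$. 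Because $E_{n+1}\otimes\mathbb{M}_n(\mathcal{O}_\infty)$ is $KK$-equivalent to $\mathbb{C}$, the analogue over $E_{n+1}\otimes\mathbb{M}_n(\mathcal{O}_\infty)$ of the exact sequence of the previous paragraph is completely computable, and a diagram chase comparing the two extensions identifies $\xi$ with the image in $K_1(C(X)\otimes A)$ of the difference $[1\otimes v]_1-[U_\alpha\otimes 1]_1\in K_1(C(X)\otimes E_{n+1}\otimes\mathbb{M}_n(\mathcal{O}_\infty))$. By the hypothesis this difference is $0$, whence $\xi=0$ and $[1_M]_0=0$.

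The step I expect to be the main obstacle is precisely this last identification of $\xi$: the cheap amplification argument with $W$ only gives $n\,[1_M]_0=0$, so one genuinely needs the equality $[v]_1=[U_\alpha]_1$ in $K_1$ (not just modulo $n$), together with a careful comparison of the two trivialisations of $[1_A]_0$ in play---the one coming from $1_{\mathcal{O}_{n+1}}=\sum_jS_jS_j^*$ and the one coming from $U_\alpha$ and $v$ on the Toeplitz side. A further technical nuisance is that $\alpha$ is not inner in $\mathcal{O}_{n+1}$, which is why the lift must be routed through $E_{n+1}\otimes\mathcal{O}_\infty$ via Theorem \ref{b}, 1) rather than through $E_{n+1}$ directly.
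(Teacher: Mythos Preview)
Your reduction of the ``in particular'' clause and your overall strategy---use the extension $0\to SC(X)\otimes A\to M\to A\to 0$ with $A=\mathcal{O}_{n+1}\otimes\mathbb{M}_n(\mathcal{O}_\infty)$, observe $[1_A]_0=0$, and thereby reduce to computing a class $\xi\in K_1(C(X)\otimes A)$---agree with the paper. The difference is entirely in how $\xi$ is computed, and there your sketch has a genuine gap.

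The paper does \emph{not} lift to $E_{n+1}$. It works directly in $\mathcal{O}_{n+1}\otimes\mathbb{M}_{n+2}(\mathcal{O}_\infty)$ via the explicit self-adjoint unitary
\[
W=\begin{pmatrix}0_{n+1}&\mathbb{S}^*\\ \mathbb{S}&0_1\end{pmatrix},\qquad \mathbb{S}=(S_1,\dots,S_{n+1}),
\]
which witnesses $[1_{n+1}\oplus 0_1]_0=[0_{n+1}\oplus 1_1]_0$. Writing $\gamma'=\alpha\otimes{\rm Ad}(v\oplus 1_2)$, one connects $W$ to $\gamma'(W)$ by a unitary path $V$, obtaining an element of the enlarged mapping torus. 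A direct matrix computation using only the defining relation $\alpha(S_i)=u(\alpha)S_i$ shows that $\gamma'(W)W$ is block diagonal with blocks whose $K_1$-classes are $\pm([v]_1-[u(\alpha)]_1)$; this identifies $\xi=[v]_1-[\pi(U_\alpha)]_1$, which vanishes by hypothesis. No Toeplitz lift, no abstract diagram chase---just the generators and relations of $\mathcal{O}_{n+1}$.

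Your proposed route via lifting to $E_{n+1}\otimes\mathbb{M}_n(\mathcal{O}_\infty)$ runs into exactly the obstacle you anticipate, and it is not minor. The claim that the lift of $\alpha\otimes{\rm Ad}v$ ``is implemented by the unitaries $U_\alpha\otimes 1$ and $1\otimes v$'' is imprecise: ${\rm Ad}\,u(\alpha)$ does \emph{not} equal $\alpha$ on $\mathcal{O}_{n+1}$ (rather $\alpha=\lambda_{u(\alpha)}$, the endomorphism $S_i\mapsto u(\alpha)S_i$), so ${\rm Ad}\,U_\alpha$ on $E_{n+1}$ does not descend to $\alpha$. Theorem~\ref{b} guarantees an abstract lift up to homotopy, but tying that lift to $U_\alpha$ well enough to read off $\xi=[v]_1-[U_\alpha]_1$ from the comparison of extensions is the entire content of the lemma, and you have not indicated how to do it. The paper's explicit $W$-computation is precisely what replaces this missing step.
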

\begin{proof}
Let $W$ be the following unitary
$$W:=\left(
\begin{array}{cc}
O_{n+1}&\mathbb{S}^*\\
\mathbb{S}&0_1
\end{array}\right)\in \mathbb{M}_{n+2}(\mathcal{O}_{n+1})\subset C(X)\otimes\mathcal{O}_{n+1}\otimes \mathbb{M}_{n+2}(\mathcal{O}_\infty),$$
where we write $\mathbb{S} :=(S_1, \cdots , S_{n+1})$.
The unitary $W$ is self-adjoint and there is a continuous path of unitary $\{V_t\}_{t\in [0, 1]}\subset C(X)\otimes \mathcal{O}_{n+1}\otimes\mathbb{M}_{n+2}(\mathcal{O}_\infty)$ satisfying $V_0=W, V_1=\alpha\otimes{\rm Ad}(v\oplus 1_2)(W)$.
The unitary $V$ is an element of $M_{\alpha\otimes {\rm Ad}(v\oplus 1_2)}(X)_{(\mathcal{O}_{n+1}\otimes\mathbb{M}_{n+2}(\mathcal{O}_\infty))}$.
It is easy to check that the following inclusion gives isomorphisms of K-groups
$$M_{\alpha\otimes {\rm Ad}v}(X)_{(\mathcal{O}_{n+1}\otimes\mathbb{M}_{n}(\mathcal{O}_\infty))}\ni f\mapsto f\oplus 0_2\in M_{\alpha\otimes {\rm Ad}(v\oplus 1_2)}(X)_{(\mathcal{O}_{n+1}\otimes\mathbb{M}_{n+2}(\mathcal{O}_\infty))},$$
and one has $$[1_{M_{\alpha\otimes {\rm Ad}v}(X)_{(\mathcal{O}_{n+1}\otimes\mathbb{M}_{n}(\mathcal{O}_\infty))}}]_0=\left[\left(
\begin{array}{cc}
1_{n+1}&0\\
0&0_1
\end{array}
\right)\right]_0-\left[\left(
\begin{array}{cc}
O_{n+1}&0\\
0&1_1
\end{array}
\right)\right]_0.
$$
One has
$$V\left(
\begin{array}{cc}
1_{n+1}&0\\
0&0_1
\end{array}
\right)V^*=VW\left(
\begin{array}{cc}
O_{n+1}&0\\
0&1_1
\end{array}
\right)WV^*,$$
and the direct computation yields
\begin{eqnarray*}
V_1W&=&\left(
\begin{array}{cc}
v\oplus 1_1&0\\
0&1_1
\end{array}
\right)\left(
\begin{array}{cc}
O_{n+1}&\alpha(\mathbb{S})^*\\
\alpha(\mathbb{S})&0_1
\end{array}
\right)\left(
\begin{array}{cc}
v^*\oplus 1_1&0\\
0&1_1
\end{array}
\right)\left(
\begin{array}{cc}
O_{n+1}&\mathbb{S}^*\\
\mathbb{S}&0_1
\end{array}
\right)\\
&=&\left(
\begin{array}{cc}
v\oplus 1_1&0\\
0&u(\alpha)
\end{array}
\right)\left(
\begin{array}{cc}
O_{n+1}&\mathbb{S}^*\\
\mathbb{S}&0_1
\end{array}
\right)\left(
\begin{array}{cc}
v^*\oplus 1_1&0\\
0&u(\alpha^*)
\end{array}
\right)\left(
\begin{array}{cc}
O_{n+1}&\mathbb{S}^*\\
\mathbb{S}&0_1
\end{array}
\right)\\
&=&\left(
\begin{array}{cc}
\mathbb{S}^*(\mathbb{S}(v\oplus 1_1)\mathbb{S}^*u(\alpha)^*)\mathbb{S}&0\\
0&u(\alpha) \mathbb{S}(v^*\oplus 1_1)\mathbb{S}^*
\end{array}
\right).
\end{eqnarray*}
Using the following exact sequence
$$K_0(SC(X)\otimes\mathcal{O}_{n+1}\otimes\mathbb{M}_{n+2}(\mathcal{O}_\infty))\hookrightarrow K_0(M_{\alpha\otimes {\rm Ad}(v\oplus 1_2)}(X)_{(\mathcal{O}_{n+1}\otimes\mathbb{M}_{n+2}(\mathcal{O}_\infty))})$$$$\to K_0(\mathcal{O}_{n+1}\otimes\mathbb{M}_{n+2}(\mathcal{O}_\infty)),$$
one can identify $[1_{M_{\alpha\otimes {\rm Ad}v}(X)_{(\mathcal{O}_{n+1}\otimes\mathbb{M}_{n}(\mathcal{O}_\infty))}}]_0$ with
$$[\mathbb{S}(v\oplus 1_1)\mathbb{S}^*u(\alpha)^*]_1=[v]_1-[\pi(U_\alpha)]_1=0\in K_1(C(X)\otimes\mathcal{O}_{n+1}\otimes \mathbb{M}_n(\mathcal{O}_\infty)).$$
\end{proof}

\section{The main theorem}
Let $[\mathcal{E}]\in [X, \operatorname{BAut}(E_{n+1}\otimes \mathcal{O}_\infty)]$ (resp. $[\mathcal{B}]\in [X, \operatorname{BAut}(\mathbb{M}_n(\mathcal{O}_\infty))]$) denote the isomorphism class of a locally trivial continuous $C(X)$-algebra $\mathcal{E}$ (resp. $\mathcal{B}$) whose fiber is $E_{n+1}\otimes\mathcal{O}_\infty$ (resp. $\mathbb{M}_n(\mathcal{O}_\infty)$).
There is a locally trivial continuous $C(X)$-algebra $\mathcal{A}\subset \mathcal{E}$ whose fiber is $\mathbb{K}\otimes\mathcal{O}_\infty$,
and we have $\mathfrak{b}_{\mathcal{O}_\infty}([\mathcal{E}]):=[\mathcal{A}]\in E_{\mathcal{O}_\infty}^1(X)$ (see Theorem \ref{b}).
\begin{thm}\label{w}
Let $(X, x_0)$ be a pointed, path connected, finite CW-complex.
For every $[\mathcal{E}]\in [X, \operatorname{BAut}(E_{n+1}\otimes\mathcal{O}_\infty)]$,
there is a unique element $[\mathcal{B}]\in [X, \operatorname{BAut}(\mathbb{M}_n(\mathcal{O}_\infty))]$ satisfying
$$[\mathbb{K}\otimes\mathcal{B}]=-[\mathcal{A}]=-\mathfrak{b}_{\mathcal{O}_\infty}([\mathcal{E}])\in \bar{E}^1_{\mathcal{O}_\infty}(X),$$
$$[1_{\mathcal{B}\otimes\mathcal{E}}]_0\in \operatorname{Im}(K_0(\mathcal{B}\otimes_{C(X)}\mathcal{A})\xrightarrow{K_0({\rm id}\otimes\iota)}K_0(\mathcal{B}\otimes_{C(X)}\mathcal{E})).$$
Here, we denote by $\iota : \mathcal{A}\hookrightarrow \mathcal{E}$ the inclusion map.
\end{thm}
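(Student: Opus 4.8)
The plan is to use Lemma~\ref{key} as the source of the existence statement and Proposition~\ref{cls} (together with its proof via the cancellation of properly infinite full projections) for the uniqueness. First I would invoke Lemma~\ref{key}: with $\mathcal{A}^{-1}$ a fixed inverse of $\mathcal{A}$ in $\bar{E}^1_{\mathcal{O}_\infty}(X)$, I obtain a properly infinite full projection $p\in \mathcal{A}^{-1}$ with $K_0(\pi\circ ev_{x_0})([p]_0)=n$ and with $[1_{p(\mathcal{A}^{-1})p\otimes_{C(X)}\mathcal{E}}]_0$ lying in the image of $K_0(\mathrm{id}\otimes\iota)$. I would then set $\mathcal{B}:=p(\mathcal{A}^{-1})p$. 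By \cite[Th.~4.2]{S} and \cite[Th.~2.11]{DP2} (as used in the proof of Lemma~\ref{key}) the corner $p(\mathcal{A}^{-1})p$ is a locally trivial continuous $C(X)$-algebra with fiber $\mathbb{M}_n(\mathcal{O}_\infty)$: the fiber condition holds because $p$ restricts to a projection of $K_0$-class $n$ in each fiber $\cong \mathbb{K}\otimes\mathcal{O}_\infty$, and a corner of $\mathbb{K}\otimes\mathcal{O}_\infty$ by a projection of class $n$ is $\mathbb{M}_n(\mathcal{O}_\infty)$; local triviality is inherited from that of $\mathcal{A}^{-1}$ after shrinking the trivializing cover. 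This gives $[\mathcal{B}]\in [X,\operatorname{BAut}(\mathbb{M}_n(\mathcal{O}_\infty))]$.

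Next I would verify the two asserted properties. For the first, $\mathbb{K}\otimes\mathcal{B}=\mathbb{K}\otimes p(\mathcal{A}^{-1})p\cong (1_\mathbb{K}\otimes p)(\mathbb{K}\otimes\mathcal{A}^{-1})(1_\mathbb{K}\otimes p)$ is a full-corner of $\mathbb{K}\otimes\mathcal{A}^{-1}$ by a properly infinite full projection, hence $C(X)$-linearly isomorphic to $\mathbb{K}\otimes\mathcal{A}^{-1}$ itself by the stabilization/cancellation for properly infinite full projections in continuous fields; therefore $[\mathbb{K}\otimes\mathcal{B}]=[\mathbb{K}\otimes\mathcal{A}^{-1}]=-[\mathcal{A}]=-\mathfrak{b}_{\mathcal{O}_\infty}([\mathcal{E}])$ in $\bar{E}^1_{\mathcal{O}_\infty}(X)$, using that $\operatorname{B}(\eta)_*$ sends $[\mathcal{A}^{-1}]$ to $[\mathbb{K}\otimes\mathcal{A}^{-1}]=-[\mathcal{A}]$. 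For the second, the condition $[1_{\mathcal{B}\otimes\mathcal{E}}]_0\in\operatorname{Im}(K_0(\mathrm{id}\otimes\iota))$ is literally the conclusion of Lemma~\ref{key} with $\mathcal{B}=p(\mathcal{A}^{-1})p$, noting $\mathcal{B}\otimes\mathcal{E}$ here means $\mathcal{B}\otimes_{C(X)}\mathcal{E}$ and that $\mathcal{B}\otimes\mathcal{E}$ is unital with unit $1_{\mathcal{B}}\otimes 1_{\mathcal{E}}$.

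For uniqueness, suppose $[\mathcal{B}']$ also satisfies the two conditions. From $[\mathbb{K}\otimes\mathcal{B}']=-[\mathcal{A}]=[\mathbb{K}\otimes\mathcal{B}]$ we get a $C(X)$-linear isomorphism $\rho:\mathbb{K}\otimes\mathcal{B}'\to \mathbb{K}\otimes\mathcal{B}\cong\mathcal{A}^{-1}$, so both $\mathcal{B}$ and $\mathcal{B}'$ arise as corners $p(\mathcal{A}^{-1})p$, $p'(\mathcal{A}^{-1})p'$ by properly infinite full projections of fibrewise class $n$. The second condition, transported through $\rho$ and through the fixed isomorphism $K_0(\mathrm{id}_{\mathcal{A}^{-1}}\otimes 1):K_0(\mathcal{A}^{-1})\to K_0(\mathcal{A}^{-1}\otimes_{C(X)}\mathcal{E})$ of Remark~\ref{m}, forces $[p]_0$ and $[p']_0$ to have the same image in $K_0(\mathcal{A}^{-1}\otimes_{C(X)}\mathcal{E})$; as in the proof of Lemma~\ref{key} and Remark~\ref{m}, $-[p]_0$ is the class corresponding to $KK_X(\iota)$, so this pins down $[p]_0=[p']_0\in K_0(\mathcal{A}^{-1})$ up to the ambiguity that $\operatorname{Aut}_X(\mathcal{A}^{-1})$ already accounts for, i.e. up to multiplication by $1+\tilde K^0(X)$. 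By Lemma~\ref{g} and the cancellation theorem for properly infinite full projections in a continuous field (the step ``the cancellation of the properly infinite full projections shows $\mathcal{B}_1\cong\mathcal{B}_2$'' in the proof of Proposition~\ref{cls}), two properly infinite full projections in $\mathcal{A}^{-1}$ whose $K_0$-classes differ by the action of some $a\in 1+\tilde K^0(X)$ have isomorphic corners; hence $\mathcal{B}\cong\mathcal{B}'$ as $C(X)$-algebras, i.e. $[\mathcal{B}]=[\mathcal{B}']$.

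The main obstacle is the uniqueness half: one has to show that the abstract $K_0$-condition $[1_{\mathcal{B}\otimes\mathcal{E}}]_0\in\operatorname{Im}(K_0(\mathrm{id}\otimes\iota))$, which a priori only constrains a class in $K_0(\mathcal{B}\otimes_{C(X)}\mathcal{E})$, actually determines the $K_0$-class of the defining corner projection $p$ in $K_0(\mathcal{A}^{-1})$ up to the $1+\tilde K^0(X)$-action — this is where the identification $K_0(\mathcal{E})\cong K^0(X)$, the $K^0(X)$-module description of $K_0(\iota)$ from Remark~\ref{m}, and the fact that $K_0(\mathrm{id}_{\mathcal{A}^{-1}}\otimes 1)$ is an isomorphism (both via \cite[Th.~1.1]{D3}) all have to be combined carefully. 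The rest is a matter of assembling already-established facts: fibrewise triviality of corners of $\mathbb{K}\otimes\mathcal{O}_\infty$ by class-$n$ projections, stability of $\bar{E}^1_{\mathcal{O}_\infty}$-classes under full corners, and the cancellation theorem for properly infinite full projections in continuous fields.
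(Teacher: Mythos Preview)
Your existence argument is fine and matches the paper: Lemma~\ref{key} produces $p\in\mathcal{A}^{-1}$, and $\mathcal{B}:=p(\mathcal{A}^{-1})p$ satisfies both conditions. The paper records this in the line ``Lemma~\ref{key} implies $[\mathcal{B}]=[p(\mathcal{A}^{-1})p]$'' right after the statement.

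The uniqueness argument, however, has a genuine gap. Your sentence ``forces $[p]_0$ and $[p']_0$ to have the same image in $K_0(\mathcal{A}^{-1}\otimes_{C(X)}\mathcal{E})$'' is not what the second condition gives you: since $K_0(\mathrm{id}_{\mathcal{A}^{-1}}\otimes 1)$ is an isomorphism, ``same image'' would literally mean $[p]_0=[p']_0$, which is false in general. What the condition actually says is only that each of $[p\otimes 1_{\mathcal E}]_0$ and $[p'\otimes 1_{\mathcal E}]_0$ lies in $\operatorname{Im}K_0(\mathrm{id}\otimes\iota)$. Likewise, the appeal to ``$-[p]_0$ is the class corresponding to $KK_X(\iota)$'' from Remark~\ref{m} is a property of the \emph{particular} $p$ built in Lemma~\ref{key}; you cannot assume it for the projection $p'$ coming from an arbitrary $\mathcal{B}'$ via an arbitrary $\rho$. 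So as written you have not explained why $[p]_0$ and $[p']_0$ lie in the same $(1+\tilde K^0(X))$-orbit.

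The missing step --- and this is exactly the content of the paper's Lemma~\ref{W} --- is an invertibility argument. Because $\iota$ induces $\mathbb{Z}\xrightarrow{\,-n\,}\mathbb{Z}$ on fibres, any preimage $a_i\in K_0(\mathcal{B}_i\otimes_{C(X)}\mathcal{A})\cong K^0(X)$ of $[1_{\mathcal{B}_i\otimes\mathcal{E}}]_0$ must lie in $-1+\tilde K^0(X)$, hence is a \emph{unit} of $K^0(X)$. After transporting via $\gamma:\mathbb{K}\otimes\mathcal{B}_1\to\mathbb{K}\otimes\mathcal{B}_2$, the ratio $K_0(\gamma\otimes\mathrm{id})(a_1)\cdot a_2^{-1}$ lies in $1+\tilde K^0(X)$, and $K^0(X)$-linearity of $K_0(\mathrm{id}\otimes\iota)$ together with the isomorphism $K_0(\mathrm{id}\otimes 1)$ then yields $[\gamma(e\otimes 1_{\mathcal B_1})]_0=(K_0(\gamma\otimes\mathrm{id})(a_1)\cdot a_2^{-1})\cdot [e\otimes 1_{\mathcal B_2}]_0$. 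Now Lemma~\ref{g},\,2) produces $\alpha\in\operatorname{Aut}_X(\mathbb{K}\otimes\mathcal B_2)$ realising this unit, and cancellation of properly infinite full projections finishes as you indicate. Your outline has all the surrounding pieces but omits precisely this ``rank $-1$, hence invertible'' observation, without which the $(1+\tilde K^0(X))$-orbit conclusion does not follow. Finally, note that the paper also checks (in the short ``Proof of Theorem~\ref{w}'' block) that the answer is independent of the chosen representative $\mathcal{E}$ of $[\mathcal{E}]$; this is easy once Lemma~\ref{W} is in hand, but you should say it.
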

Lemma \ref{key} implies $[\mathcal{B}]=[p(\mathcal{A}^{-1})p]$,
and the second condition is equivalent to
$$[1_{\mathcal{B}\otimes\mathcal{O}}]_0=0\in K_0(\mathcal{B}\otimes_{C(X)}\mathcal{O}).$$
\begin{lem}\label{W}
Fix a continuous field $\mathcal{E}$ of $E_{n+1}\otimes\mathcal{O}_\infty$.
For two continuous fields $\mathcal{B}_1, \mathcal{B}_2$ of $\mathbb{M}_n(\mathcal{O}_\infty)$ satisfying two conditions in Theorem \ref{w} with respect to $\mathcal{E}$,
we have $[\mathcal{B}_1]=[\mathcal{B}_2]$.
\end{lem}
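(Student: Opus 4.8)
The plan is to reduce the statement to the injectivity of the bijection in Proposition \ref{cls} applied to the continuous field $\mathcal{A}^{-1}$. The first condition of Theorem \ref{w} gives $\operatorname{B}(\eta)_*([\mathcal{B}_i])=-[\mathcal{A}]=[\mathcal{A}^{-1}]\in\bar{E}^1_{\mathcal{O}_\infty}(X)$ for $i=1,2$, so both classes lie in $\operatorname{B}(\eta)_*^{-1}([\mathcal{A}^{-1}])$ and Proposition \ref{cls} is available once we fix the base point isomorphism $\pi\colon(\mathcal{A}^{-1})_{x_0}\cong\mathbb{K}\otimes\mathcal{O}_\infty$ as in Lemma \ref{key}. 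It then suffices to prove that, for each $i$, the invariant $[\rho_{\mathcal{B}_i}(e\otimes 1_{\mathcal{B}_i})]_0$ is $\sim$-equivalent to $[p]_0$, where $p\in\mathcal{A}^{-1}$ is the projection furnished by Lemma \ref{key}; then the two invariants represent the same class and the injectivity of the map in Proposition \ref{cls} forces $[\mathcal{B}_1]=[\mathcal{B}_2]$.

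First I would rewrite the second condition of Theorem \ref{w} as a statement inside $K_0(\mathcal{A}^{-1})$. Passing to the stabilizations $\mathbb{K}\otimes(\mathcal{B}_i\otimes_{C(X)}\mathcal{E})\cong(\mathbb{K}\otimes\mathcal{B}_i)\otimes_{C(X)}\mathcal{E}$ (and likewise with $\mathcal{A}$ in place of $\mathcal{E}$) and applying the $C(X)$-linear isomorphism $\rho_{\mathcal{B}_i}\otimes_{C(X)}\operatorname{id}$ carries $[1_{\mathcal{B}_i\otimes\mathcal{E}}]_0$ to $[\rho_{\mathcal{B}_i}(e\otimes 1_{\mathcal{B}_i})\otimes 1_\mathcal{E}]_0=K_0(\operatorname{id}_{\mathcal{A}^{-1}}\otimes 1)([\rho_{\mathcal{B}_i}(e\otimes 1_{\mathcal{B}_i})]_0)$ and identifies $\operatorname{Im}\bigl(K_0(\mathcal{B}_i\otimes_{C(X)}\mathcal{A})\xrightarrow{K_0(\operatorname{id}\otimes\iota)}K_0(\mathcal{B}_i\otimes_{C(X)}\mathcal{E})\bigr)$ with $\operatorname{Im}\bigl(K_0(\mathcal{A}^{-1}\otimes_{C(X)}\mathcal{A})\xrightarrow{K_0(\operatorname{id}_{\mathcal{A}^{-1}}\otimes\iota)}K_0(\mathcal{A}^{-1}\otimes_{C(X)}\mathcal{E})\bigr)$. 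By Remark \ref{m} and the explicit description of $p$ in Lemma \ref{key}, this last image equals $K_0(\operatorname{id}_{\mathcal{A}^{-1}}\otimes 1)\bigl(K^0(X)\cdot[p]_0\bigr)$, the image of the $K^0(X)$-submodule of $K_0(\mathcal{A}^{-1})$ generated by $[p]_0$; here one uses that both $K_0(\operatorname{id}_{\mathcal{A}^{-1}}\otimes\iota)$ and $K_0(\operatorname{id}_{\mathcal{A}^{-1}}\otimes 1)$ are $K^0(X)$-module maps, with the latter an isomorphism (as recalled before Lemma \ref{key}). Since $K_0(\operatorname{id}_{\mathcal{A}^{-1}}\otimes 1)$ is injective, the second condition for $\mathcal{B}_i$ becomes
$$[\rho_{\mathcal{B}_i}(e\otimes 1_{\mathcal{B}_i})]_0=a_i\cdot[p]_0\quad\text{for some }a_i\in K^0(X).$$

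Next I would pin down $a_i$ modulo $\tilde{K}^0(X)$. Evaluating at $x_0$, the homomorphism $K_0(\pi\circ ev_{x_0})\colon K_0(\mathcal{A}^{-1})\to K_0(\mathbb{K}\otimes\mathcal{O}_\infty)=\mathbb{Z}$ is a module map over the restriction $\epsilon\colon K^0(X)\to K^0(x_0)=\mathbb{Z}$, so the normalization of $\rho_{\mathcal{B}_i}$ from Proposition \ref{cls} together with $K_0(\pi\circ ev_{x_0})([p]_0)=n$ (Lemma \ref{key}) yields $n=\epsilon(a_i)\,n$, hence $\epsilon(a_i)=1$, i.e.\ $a_i\in 1+\tilde{K}^0(X)$. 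Then $[p]_0\cdot a_i=a_i\cdot[p]_0=[\rho_{\mathcal{B}_i}(e\otimes 1_{\mathcal{B}_i})]_0$ exhibits $[\rho_{\mathcal{B}_i}(e\otimes 1_{\mathcal{B}_i})]_0\sim[p]_0$ directly from the definition of $\sim$; since $1+\tilde{K}^0(X)$ is a group (Theorem \ref{DP}) this relation is symmetric, so $[\rho_{\mathcal{B}_1}(e\otimes 1_{\mathcal{B}_1})]_0\sim[p]_0\sim[\rho_{\mathcal{B}_2}(e\otimes 1_{\mathcal{B}_2})]_0$, and Proposition \ref{cls} gives $[\mathcal{B}_1]=[\mathcal{B}_2]$.

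The main obstacle is the middle step: transporting the condition $[1_{\mathcal{B}_i\otimes\mathcal{E}}]_0\in\operatorname{Im}(K_0(\operatorname{id}\otimes\iota))$ through the stabilization, the isomorphism $\rho_{\mathcal{B}_i}$, the isomorphism $K_0(\operatorname{id}_{\mathcal{A}^{-1}}\otimes 1)$ coming from the $KK$-equivalence $E_{n+1}\otimes\mathcal{O}_\infty\sim\mathbb{C}$, and the identification of $KK_X(\iota)$ with $-[p]_0$ from Remark \ref{m}, and then recognizing the resulting subgroup as the cyclic $K^0(X)$-submodule on $[p]_0$. This amounts to checking that $\rho_{\mathcal{B}_i}\otimes_{C(X)}\operatorname{id}$ intertwines the two occurrences of $\operatorname{id}\otimes\iota$ and that the several module structures in play (the one from $\Delta_X$, the one from the auxiliary $\mathbb{K}\otimes\mathcal{O}_\infty$ tensor factor, and the one from the identification $\mathcal{A}\otimes_{C(X)}\mathcal{A}^{-1}\cong C(X)\otimes\mathbb{K}\otimes\mathcal{O}_\infty$) are compatible; once that bookkeeping is settled, the remaining arguments are formal manipulations with the $K^0(X)$-module $K_0(\mathcal{A}^{-1})$.
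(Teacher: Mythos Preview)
Your argument is correct and morally the same as the paper's, but the route is organized differently. The paper does not pass through the projection $p$ of Lemma~\ref{key} or through Proposition~\ref{cls}; instead it compares $\mathcal{B}_1$ and $\mathcal{B}_2$ directly. It picks a $C(X)$-linear isomorphism $\gamma:\mathbb{K}\otimes\mathcal{B}_1\to\mathbb{K}\otimes\mathcal{B}_2$ normalized at $x_0$, observes (exactly as you do) that the second condition forces the preimages $a_i\in K_0(\mathbb{K}\otimes\mathcal{B}_i\otimes_{C(X)}\mathcal{A})\cong K^0(X)$ of $[e\otimes 1_{\mathcal{B}_i}\otimes 1_\mathcal{E}]_0$ to lie in $-1+\tilde K^0(X)$, and then uses the commutative square with $\gamma\otimes\mathrm{id}$ together with Lemma~\ref{g} to produce $\alpha\in\operatorname{Aut}_X(\mathbb{K}\otimes\mathcal{B}_2)$ with $[\gamma(e\otimes 1_{\mathcal{B}_1})]_0=[\alpha(e\otimes 1_{\mathcal{B}_2})]_0$; cancellation of properly infinite full projections then gives $\mathcal{B}_1\cong\mathcal{B}_2$. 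Your version factors both $[\rho_{\mathcal{B}_i}(e\otimes 1_{\mathcal{B}_i})]_0$ through the fixed class $[p]_0\in K_0(\mathcal{A}^{-1})$ and invokes the injectivity half of Proposition~\ref{cls}, which internally performs the same Lemma~\ref{g}/cancellation step. What your approach buys is a clean conceptual picture: the uniqueness in Theorem~\ref{w} is exactly the injectivity of the classifying map of Proposition~\ref{cls}, with $[p]_0$ serving as a common reference point. What the paper's approach buys is economy: it does not need the existence of $p$ from Lemma~\ref{key} and avoids the ``bookkeeping'' you flag about compatibility of the several $K^0(X)$-module structures, since the comparison $a_1\mapsto K_0(\gamma\otimes\mathrm{id})(a_1)\cdot a_2^{-1}$ happens entirely inside one copy of $K^0(X)^\times$.
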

\begin{proof}
One can find a $C(X)$-linear isomorphism $\gamma : \mathbb{K}\otimes\mathcal{B}_1\to\mathbb{K}\otimes\mathcal{B}_2$ satisfying $[ev_{x_0}\circ\gamma(e\otimes 1_{\mathcal{B}_1})]_0=[e\otimes 1_{({\mathcal{B}_2})_{x_0}}]_0=n\in K_0((\mathbb{K}\otimes{\mathcal{B}_2})_{x_0})$.
Let $\iota : \mathcal{A}\hookrightarrow\mathcal{E}$ be as in Theorem \ref{b}.
Since the inclusion $\mathbb{K}\otimes\mathcal{O}_\infty\to E_{n+1}\otimes\mathcal{O}_\infty$ gives a map $\mathbb{Z}\xrightarrow{-n}\mathbb{Z}$ of $K_0$-groups, 
the preimage of $[1_{\mathcal{B}_i\otimes\mathcal{E}}]_0$ should be a ``rank $- 1$'' projection in $K_0(\mathcal{B}_i\otimes_{C(X)}\mathcal{A})=K^0(X)$.
Now one has an element $a_i\in - 1+\tilde{K}^0(X)\subset K_0(\mathbb{K}\otimes\mathcal{B}_i\otimes_{C(X)}\mathcal{A})^\times$ which is sent to $[e\otimes 1_{\mathcal{B}_i}\otimes 1_{\mathcal{E}}]_0\in K_0(\mathbb{K}\otimes\mathcal{B}_i\otimes_{C(X)}\mathcal{E})$ by the map $K_0({\rm id}_{\mathbb{K}\otimes\mathcal{B}_i}\otimes\iota)$.
The following commutative diagram and Lemma \ref{g} show that there is a map $\alpha\in \operatorname{Aut}_X(\mathbb{K}\otimes\mathcal{B}_2)$ satisfying $[\gamma(e\otimes 1_{\mathcal{B}_1})]_0=[\alpha(e\otimes 1_{\mathcal{B}_2})]_0$, $\cdot(K_0(\gamma\otimes{\rm id})(a_1)\cdot a_2^{-1})=K_0(\alpha)\;$:
$$\xymatrix{
\mathbb{K}\otimes\mathcal{B}_1\otimes_{C(X)}\mathcal{A}\ar[r]^{{\rm id}\otimes\iota}\ar[d]^{\gamma\otimes {\rm id}}&\mathbb{K}\otimes\mathcal{B}_1\otimes_{C(X)}\mathcal{E}\ar[d]^{\gamma\otimes {\rm id}}&\mathbb{K}\otimes\mathcal{B}_1\ar[l]^{\quad\quad{\rm id}\otimes 1}\ar[d]^{\gamma}\\
\mathbb{K}\otimes\mathcal{B}_2\otimes_{C(X)}\mathcal{A}\ar[r]^{{\rm id}\otimes\iota}&\mathbb{K}\otimes\mathcal{B}_2\otimes_{C(X)}\mathcal{E}&\mathbb{K}\otimes\mathcal{B}_2.\ar[l]^{\quad\quad{\rm id}\otimes 1}
}$$
Therefore,
we have $\mathcal{B}_1\cong\mathcal{B}_2$.
\end{proof}
\begin{proof}[{Proof of Theorem \ref{w}}]
Let $(\mathcal{E}_i, \mathcal{A}_i, \mathcal{B}_i),  i=1, 2$ be continuous fields satisfying the conditions of Theorem \ref{w}.
Assume that there is a $C(X)$-linear isomorphism $\phi : \mathcal{E}_1\to \mathcal{E}_2$ (i.e., $[\mathcal{E}_1]=[\mathcal{E}_2]$).
We show $[\mathcal{B}_1]=[\mathcal{B}_2]$.
Since the following diagram commutes,
$$\xymatrix{
\mathcal{B}_1\otimes_{C(X)}\mathcal{A}_1\ar[r]^{{\rm id}\otimes\iota_1}\ar[d]^{{\rm id}\otimes\phi |_{\mathcal{A}_1}}&\mathcal{B}_1\otimes_{C(X)}\mathcal{E}_1\ar[d]^{{\rm id}\otimes\phi}\\
\mathcal{B}_1\otimes_{C(X)}\mathcal{A}_2\ar[r]^{{\rm id}\otimes \iota_2}&\mathcal{B}_1\otimes_{C(X)}\mathcal{E}_2,
}$$
the pair $(\mathcal{E}_2, \mathcal{A}_2, \mathcal{B}_1)$ also satisfies the conditions.
Now Lemma \ref{W} proves the statement.
\end{proof}
By Theorem \ref{w}, the map $t_X : [X, \operatorname{BAut}(E_{n+1}\otimes\mathcal{O}_\infty)]\ni [\mathcal{E}]\mapsto [\mathcal{B}]\in [X, \operatorname{BAut}(\mathbb{M}_n(\mathcal{O}_\infty))]$ is well-defined.
For a base point preserving continuous map $f : (Y, y_0)\to (X, x_0)$ and a continuous field $\mathcal{E}$ over $X$,
one has the pull-back of the continuous field $f^*\mathcal{E}:=C(Y)\otimes_{C(X)}\mathcal{E}$ with a natural homomorphism $\mathcal{E}\ni d\mapsto 1\otimes d\in C(Y)\otimes_{C(X)}\mathcal{E}=f^*\mathcal{E}$.
Since one has $f^*(\mathcal{B}\otimes_{C(X)}\mathcal{E})\cong f^*\mathcal{B}\otimes_{C(Y)}f^*\mathcal{E}$,
it is easy to check that the map $t_X$ is natural with respect to $X$.
\begin{dfn}
Let $q : \operatorname{Aut}(E_{n+1}\otimes\mathcal{O}_\infty)\to \operatorname{Aut}(\mathcal{O}_{n+1})$ be the group homomorphism which is a weak homotopy equivalence.
Let $\mathcal{C}_0$ be the category whose objects are pointed, path connected, finite CW-complexes,
and morphisms are the base point preserving continuous maps.
Let $\mathcal{S}$ be the category of sets with a distinguished element and maps preserving the distinguished elements.
For $(X, x_0)\in \mathcal{C}_0$,
we regard $(X, x_0)\mapsto [X, \operatorname{BAut}(\mathcal{O}_{n+1})]$ and $(X, x_0)\mapsto [X, \operatorname{BAut}(\mathbb{M}_n(\mathcal{O}_\infty))]$ as contravariant functors from $\mathcal{C}_0$ to $\mathcal{S}$ where the distinguished element is the homotopy class of the constant map.
We define a natural transformation by
$$T_X :=t_X\circ (\operatorname{B}(q)_*)^{-1} : [X, \operatorname{BAut}(\mathcal{O}_{n+1})]\to [X, \operatorname{BAut}(\mathbb{M}_n(\mathcal{O}_\infty))].$$
\end{dfn}
\begin{prop}\label{B}
One has $T_X([\mathcal{O}])=[\mathcal{B}]$ if and only if $[1_{\mathcal{B}\otimes\mathcal{O}}]_0=0\in K_0(\mathcal{B}\otimes_{C(X)}\mathcal{O})$.
\end{prop}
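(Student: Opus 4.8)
The plan is to reduce Proposition \ref{B} to Theorem \ref{w}. Put $[\mathcal{E}]:=(\operatorname{B}(q)_*)^{-1}([\mathcal{O}])$, so that $\mathcal{O}\cong\mathcal{E}/\mathcal{A}$ as in Theorem \ref{b} and $\mathfrak{b}_{\mathcal{O}_\infty}([\mathcal{E}])=[\mathcal{A}]$; then $T_X([\mathcal{O}])=t_X([\mathcal{E}])$ is, by Theorem \ref{w}, the unique class $[\mathcal{B}]$ satisfying both \textbf{(i)} $[\mathbb{K}\otimes\mathcal{B}]=-[\mathcal{A}]$ in $\bar E^1_{\mathcal{O}_\infty}(X)$ and \textbf{(ii)} $[1_{\mathcal{B}\otimes\mathcal{E}}]_0\in\operatorname{Im}(K_0({\rm id}\otimes\iota))$. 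First I would record, as in the remark after Theorem \ref{w}, that (ii) is equivalent to $[1_{\mathcal{B}\otimes\mathcal{O}}]_0=0$: tensoring $0\to\mathcal{A}\to\mathcal{E}\to\mathcal{O}\to0$ with the nuclear locally trivial field $\mathcal{B}$ over $C(X)$ keeps it exact, so in the six-term sequence $\operatorname{Im}(K_0({\rm id}\otimes\iota))=\ker(K_0(\mathcal{B}\otimes_{C(X)}\mathcal{E})\to K_0(\mathcal{B}\otimes_{C(X)}\mathcal{O}))$, and $1_{\mathcal{B}\otimes\mathcal{E}}\mapsto 1_{\mathcal{B}\otimes\mathcal{O}}$. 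This already settles the ``only if'' direction, since $T_X([\mathcal{O}])=[\mathcal{B}]$ forces (ii).

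The real content of the ``if'' direction is that $[1_{\mathcal{B}\otimes\mathcal{O}}]_0=0$ forces (i) as well; granting that, (i) and (ii) both hold and Lemma \ref{W} (the uniqueness part of Theorem \ref{w}) gives $[\mathcal{B}]=t_X([\mathcal{E}])=T_X([\mathcal{O}])$. To produce (i), I would again tensor $0\to\mathcal{A}\to\mathcal{E}\to\mathcal{O}\to0$ by $\mathcal{B}$ over $C(X)$ and lift the unit: by hypothesis $[1_{\mathcal{B}\otimes\mathcal{E}}]_0=K_0({\rm id}\otimes\iota)(\xi)$ for some $\xi\in K_0(\mathcal{B}\otimes_{C(X)}\mathcal{A})$. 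The fiber of $\mathcal{B}\otimes_{C(X)}\mathcal{A}$ is $\cong\mathbb{K}\otimes\mathcal{O}_\infty$ and that of $\mathcal{B}\otimes_{C(X)}\mathcal{E}$ is $\mathbb{M}_n(\mathcal{O}_\infty)\otimes E_{n+1}\otimes\mathcal{O}_\infty$, and, since $\mathbb{K}\hookrightarrow E_{n+1}$ induces multiplication by $-n$ on $K_0$ and $[1]_0=n$ in $K_0(\mathbb{M}_n(\mathcal{O}_\infty)\otimes E_{n+1}\otimes\mathcal{O}_\infty)$, a fiberwise check shows $-\xi$ restricts to the class of a rank-one projection in $\mathbb{K}\otimes\mathcal{O}_\infty$ at every point of $X$. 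As $\mathcal{B}\otimes_{C(X)}\mathcal{A}$ is an $\mathcal{O}_\infty$-stable locally trivial field over the finite CW-complex $X$, \cite[{Th. 1.1, 2.7}]{D3} yields a projection $Q\in\mathcal{B}\otimes_{C(X)}\mathcal{A}$ with $[Q]_0=-\xi$; being fiberwise a rank-one projection in a field of the simple algebra $\mathbb{K}\otimes\mathcal{O}_\infty$, $Q$ is full, so Brown's stabilization theorem gives $\mathbb{K}\otimes Q(\mathcal{B}\otimes_{C(X)}\mathcal{A})Q\cong\mathbb{K}\otimes(\mathcal{B}\otimes_{C(X)}\mathcal{A})$.

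Finally $Q(\mathcal{B}\otimes_{C(X)}\mathcal{A})Q$ is a locally trivial continuous $C(X)$-algebra all of whose fibers are $\mathcal{O}_\infty$, hence is trivial: a continuous field of $\mathcal{O}_\infty$ over a finite CW-complex is isomorphic to $C(X)\otimes\mathcal{O}_\infty$, equivalently $\operatorname{Aut}(\mathcal{O}_\infty)$ is weakly contractible, the $n=1$ instance of \cite[{Th. 5.9}]{D4}. Hence $\mathbb{K}\otimes\mathcal{B}\otimes_{C(X)}\mathcal{A}\cong C(X)\otimes\mathbb{K}\otimes\mathcal{O}_\infty$, i.e. $[\mathbb{K}\otimes\mathcal{B}]+[\mathcal{A}]=0$ in $\bar E^1_{\mathcal{O}_\infty}(X)$, which is condition (i); Lemma \ref{W} then completes the argument. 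I expect the main obstacle to be exactly this deduction of (i) from $[1_{\mathcal{B}\otimes\mathcal{O}}]_0=0$: it needs the fiberwise $K_0$-bookkeeping identifying $-\xi$ as a genuine (rank-one) projection class, together with the triviality of continuous fields of $\mathcal{O}_\infty$; everything else is formal use of the six-term exact sequence and of the cancellation of properly infinite full projections already exploited in Lemma \ref{W} and Proposition \ref{cls}.
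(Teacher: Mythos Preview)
Your argument is correct and the overall strategy coincides with the paper's: both tensor the exact sequence $0\to\mathcal{A}\to\mathcal{E}\to\mathcal{O}\to 0$ by $\mathcal{B}$, use the six-term sequence to see that (ii) is equivalent to $[1_{\mathcal{B}\otimes\mathcal{O}}]_0=0$, and then observe that the lift $\xi\in K_0(\mathcal{B}\otimes_{C(X)}\mathcal{A})$ of $[1_{\mathcal{B}\otimes\mathcal{E}}]_0$ is fiberwise of rank $-1$ because $\mathbb{K}\hookrightarrow E_{n+1}$ induces multiplication by $-n$ on $K_0$.

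The only real difference is in how one concludes (i) from the existence of this rank~$-1$ class. The paper simply invokes \cite[{Th.~4.2}]{DP}, which says precisely that a locally trivial field of $\mathbb{K}\otimes\mathcal{O}_\infty$ whose $K_0$-group contains an element of fiberwise rank $\pm 1$ represents the trivial class in $\bar E^1_{\mathcal{O}_\infty}(X)$; this immediately gives $[\mathbb{K}\otimes\mathcal{B}]+[\mathcal{A}]=0$. You instead unwind this step by hand: realize $-\xi$ by a full projection $Q$, identify the corner $Q(\mathcal{B}\otimes_{C(X)}\mathcal{A})Q$ as a locally trivial field of $\mathcal{O}_\infty$, and use the weak contractibility of $\operatorname{Aut}(\mathcal{O}_\infty)$ together with Brown's stabilization theorem. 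Your route is a bit longer but more self-contained, in effect reproving the special case of \cite[{Th.~4.2}]{DP} that is needed; the paper's route is shorter but leans on the Dadarlat--Pennig machinery already set up in Section~2.2. One small point worth making explicit in your write-up is the local triviality of the corner: over a trivializing open set the projection $Q$ is Murray--von Neumann equivalent to a constant rank-one projection by cancellation in $\mathcal{O}_\infty$-stable algebras, so the corner is locally $C(U)\otimes\mathcal{O}_\infty$; and the Brown stabilization isomorphism is automatically $C(X)$-linear since it comes from the $C(X)$-bimodule $Q(\mathcal{B}\otimes_{C(X)}\mathcal{A})$.
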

\begin{proof}
Assume $[1_{\mathcal{B}\otimes\mathcal{O}}]_0=0$.
Since there is an exact sequence $\mathcal{A}\to\mathcal{E}\to\mathcal{O}$ as in Theorem \ref{b}, we have an element $a\in K_0(\mathcal{B}\otimes_{C(X)}\mathcal{A})$ which is sent to $[1_{\mathcal{B}\otimes\mathcal{E}}]_0\in K_0(\mathcal{B}\otimes_{C(X)}\mathcal{E})$.
Therefore,
the element $a$ is ``rank $-1$'' projection,
and \cite[{Th. 4.2}]{DP} implies $-[\mathcal{A}]=[\mathbb{K}\otimes\mathcal{B}]\in \bar{E}_{\mathcal{O}_\infty}^1(X)$ (i.e., $T_X([\mathcal{O}])=[\mathcal{B}]$).
\end{proof}
\begin{rem}
One has $[X, \operatorname{BAut}(\mathcal{O}_{n+1})]_0=[X, \operatorname{BAut}(\mathcal{O}_{n+1})]$ and $[X, \operatorname{BAut}(\mathbb{M}_n(\mathcal{O}_\infty))]_0=[X, \operatorname{BAut}(\mathbb{M}_n(\mathcal{O}_\infty))]$ as mentioned in Section 2.1.
The map $T_X$ is bijective for every $(X, x_0)\in\mathcal{C}_0$ if and only if $T_{S^k}$ is bijective for every $k\geq 1$ by Brown's representability theorem \cite[{Lem. 1.5}]{Br}.
\end{rem}
\begin{lem}\label{grr}
Let $SX$ be the reduced suspension of a connected finite CW-complex $(X, x_0)$.
For a path connected group $G$,
we have a natural group isomorphism $[SX, \operatorname{B}G]_0\to [X, G]$.
\end{lem}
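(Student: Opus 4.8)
The plan is to construct the bijection $[SX, \operatorname{B}G]_0 \to [X, G]$ by a clutching-function argument and then verify compatibility with the group structures and naturality. First I would recall that for a path connected topological group $G$, the classifying space $\operatorname{B}G$ is well-pointed, and there is a natural bijection $[Y, \operatorname{B}G]_0 \cong [Y, \operatorname{B}G]$ for connected $Y$ because $\operatorname{B}G$ is simply connected when $G$ is path connected (indeed $\pi_1(\operatorname{B}G) = \pi_0(G) = 0$), so the distinguished base points cause no trouble; this is the same phenomenon already used in Section 2.1 for $\operatorname{BAut}(\mathcal{O}_{n+1})$ and $\operatorname{BAut}(\mathbb{M}_n(\mathcal{O}_\infty))$. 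Next I would use the standard adjunction between the reduced suspension $S$ and the loop functor $\Omega$: one has $[SX, \operatorname{B}G]_0 \cong [X, \Omega \operatorname{B}G]_0$. Since there is a natural weak homotopy equivalence $G \simeq \Omega \operatorname{B}G$ (the canonical map $G \to \Omega \operatorname{B}G$ coming from the universal principal bundle $EG \to \operatorname{B}G$, which is a weak equivalence for any topological group), and since $X$ is a finite CW-complex, passing to homotopy classes gives the desired natural bijection $[X, \Omega \operatorname{B}G]_0 \cong [X, G]$.

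A more hands-on and self-contained route, which I expect to be cleaner in this C*-algebraic context and which matches the notation $\Gamma_\alpha(\Sigma X)$ introduced just above, is the clutching-function description. Using the cofiber sequence presentation $SX = X \times [0,1] / (X \times \{0,1\} \cup \{x_0\} \times [0,1])$, a principal $G$-bundle over $SX$ is trivialized over the two cones and glued along $X$ by a clutching map $X \to G$; homotopic clutching maps give isomorphic bundles and conversely, and the base point condition on $SX$ forces (after a homotopy) the clutching map to send $x_0$ to the identity, i.e. to represent an element of $[X, G] = [X, G]_0$ (again because $G$ is path connected the based and unbased homotopy sets agree). This is precisely the content of \cite[Cor. 8.3]{H} cited for the unreduced suspension $\Sigma X$; the only modification needed is to collapse the arc $\{x_0\} \times [0,1]$, which is a contractible subcomplex, so $SX \simeq \Sigma X$ canonically and the bijection for $\Sigma X$ transports to one for $SX$.

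For the group structures: $[SX, \operatorname{B}G]_0$ carries its group multiplication from the co-H-structure of the suspension $SX$ (the pinch map $SX \to SX \vee SX$), while $[X, G]$ carries its group multiplication from the pointwise product in $G$. I would check these agree under the bijection by the classical Eckmann–Hilton / clutching computation: concatenation of loops (equivalently, stacking two cones) corresponds to pointwise multiplication of clutching functions, since the clutching function of a product of bundles glued by $\alpha$ and $\beta$ is $\alpha \cdot \beta$. The main obstacle — though it is really a bookkeeping point rather than a deep one — is verifying that the two a priori different group operations on $[X,G]$ (the one transported from the suspension co-H-structure and the pointwise one) literally coincide, including signs/orientations in the identification $SX = X \wedge S^1$; this is where one must be careful about which end of the interval is glued by the clutching map and about the direction of concatenation. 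Naturality in $X$ is immediate once the bijection is realized either via the $S \dashv \Omega$ adjunction (natural by construction) or via clutching functions (a based map $f \colon Y \to X$ pulls back the clutching function $\alpha$ to $\alpha \circ f$, and $Sf$ pulls back the corresponding bundle accordingly).
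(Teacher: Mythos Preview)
Your proposal is correct, and your second (clutching-function) route is essentially identical to the paper's proof: pass from $[SX,\operatorname{B}G]_0$ to $[SX,\operatorname{B}G]$ using $\pi_1(\operatorname{B}G)=\pi_0(G)=0$, then to $[\Sigma X,\operatorname{B}G]$ via the homotopy equivalence $\Sigma X\simeq SX$, then to $[X,G]$ by \cite[Cor.~8.3]{H}, and finally check the composite is a group homomorphism. Your first route via the adjunction $[SX,\operatorname{B}G]_0\cong[X,\Omega\operatorname{B}G]_0\cong[X,G]$ is a valid alternative the paper does not use, but it yields the same map.
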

\begin{proof}
Since $G$ is path connected, one has $[SX, \operatorname{B}G]_0=[SX, \operatorname{B}G]$.
For a CW-complex,
the quotient map $\Sigma X\to SX$ is a homotopy equivalence,
and the map $[SX, \operatorname{B}G]\to [\Sigma X, \operatorname{B}G]$ is bijective.
By \cite[{Cor. 8.3}]{H},
we have a bijection $[\Sigma X, \operatorname{B}G]\to [X, G]$,
where the homotopy class of the map $\alpha : X\to G$ is sent to the isomorphism class of the principal $G$-bundle on $\Sigma X$ whose clutching function over $X$ is the map $\alpha$.
It is easy to check that the composition of the above maps is a group homomorphism.
\end{proof}
Since two classifying spaces $\operatorname{BAut}(\mathcal{O}_{n+1})$ and $\operatorname{BAut}(\mathbb{M}_n(\mathcal{O}_\infty))$ have countable homotopy groups,
there are CW-complexes $Y_1, Y_2$ with countably many cells and two weak homotopy equivalences $Y_1\to \operatorname{BAut}(\mathcal{O}_{n+1})$, $Y_2\to\operatorname{BAut}(\mathbb{M}_n(\mathcal{O}_\infty))$ (see \cite[{p 188}]{AT}).
\begin{cor}\label{gi}
The map $T_{SX} : [SX, \operatorname{BAut}(\mathcal{O}_{n+1})]\to [SX, \operatorname{BAut}(\mathbb{M}_n(\mathcal{O}_\infty))]$ gives a group homomorphism $[X, \operatorname{Aut}(\mathcal{O}_{n+1})]\to [X, \operatorname{Aut}(\mathbb{M}_n(\mathcal{O}_\infty))]$,
and, for $X=S^{k-1}$,
the map $T_{S^k} : [S^k, \operatorname{BAut}(\mathcal{O}_{n+1})]\to [S^k, \operatorname{BAut}(\mathbb{M}_n(\mathcal{O}_\infty)]$ is identified with ${\rm id}_{K^1(S^{k-1})\otimes\mathbb{Z}_n}$.
\end{cor}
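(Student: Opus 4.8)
The plan is to prove Corollary \ref{gi} in two stages, corresponding to its two assertions. For the first assertion, I would combine Lemma \ref{grr} with the naturality of the transformation $T$. Applying Lemma \ref{grr} to the path connected groups $G=\operatorname{Aut}(\mathcal{O}_{n+1})$ and $G=\operatorname{Aut}(\mathbb{M}_n(\mathcal{O}_\infty))$, we identify $[SX,\operatorname{BAut}(\mathcal{O}_{n+1})]$ with $[X,\operatorname{Aut}(\mathcal{O}_{n+1})]$ and $[SX,\operatorname{BAut}(\mathbb{M}_n(\mathcal{O}_\infty))]$ with $[X,\operatorname{Aut}(\mathbb{M}_n(\mathcal{O}_\infty))]$ as groups. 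Under these identifications, $T_{SX}$ becomes a map $[X,\operatorname{Aut}(\mathcal{O}_{n+1})]\to[X,\operatorname{Aut}(\mathbb{M}_n(\mathcal{O}_\infty))]$, and the content is that this map is a group homomorphism. The group operation on $[SX,\operatorname{B}G]$ comes from the co-H-structure on the suspension $SX$ (the pinch map $SX\to SX\vee SX$), so a homotopy class on $SX\vee SX$ pulls back to the ``sum.'' Since $T$ is a natural transformation of contravariant functors on $\mathcal{C}_0$, it commutes with the pullback along the pinch map $SX\to SX\vee SX$ and along the two inclusions $SX\to SX\vee SX$; one also needs that $T_{SX\vee SX}$ is compatible with the wedge decomposition, which follows because $[SX\vee SX,\operatorname{B}G]\cong[SX,\operatorname{B}G]\times[SX,\operatorname{B}G]$ naturally and $T$ respects products (it is a natural transformation, so it is determined coordinatewise). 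Chasing the pinch map through the naturality square then shows $T_{SX}$ sends a sum to a sum.

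For the second assertion, I would specialize to $X=S^{k-1}$, so $SX=S^k$. By Theorem \ref{is}, part 2), the group $[S^{k-1},\operatorname{Aut}(\mathcal{O}_{n+1})]$ sits in a short exact sequence; since $S^{k-1}$ is a sphere the torsion subgroup $1+\operatorname{Tor}(K^0(S^{k-1}),\mathbb{Z}_n)$ is trivial (the reduced $K^0$ of a sphere is torsion-free, being $0$ or $\mathbb{Z}$), so $[S^{k-1},\operatorname{Aut}(\mathcal{O}_{n+1})]\cong K^1(S^{k-1})\otimes\mathbb{Z}_n$; similarly by Theorem \ref{dp} part 1), $[S^{k-1},\operatorname{Aut}(\mathbb{M}_n(\mathcal{O}_\infty))]\cong K_1(C(S^{k-1})\otimes\mathbb{M}_n(\mathcal{O}_\infty))\otimes\mathbb{Z}_n\cong K^1(S^{k-1})\otimes\mathbb{Z}_n$ (using $K_1(\mathbb{M}_n(\mathcal{O}_\infty))$-coefficients and $\mathcal{O}_\infty$ having trivial $K_1$, so $K_1(C(S^{k-1})\otimes\mathbb{M}_n(\mathcal{O}_\infty))\cong K^1(S^{k-1})$). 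The remaining task is to compute $T_{S^k}$ on this group explicitly and show it is the identity under these identifications. Here I would invoke Lemma \ref{iz}: for $[\alpha]\in K^1(X)\otimes\mathbb{Z}_n\subset[X,\operatorname{Aut}(\mathcal{O}_{n+1})]$, taking a unitary $v\in U(C(X)\otimes\mathbb{M}_n(\mathcal{O}_\infty))$ with $[v]_1=[U_\alpha]_1$ yields $[1_{\Gamma_\alpha(\Sigma X)_{\mathcal{O}_{n+1}}\otimes_{C(\Sigma X)}\Gamma_{\operatorname{Ad}v}(\Sigma X)_{\mathbb{M}_n(\mathcal{O}_\infty)}}]_0=0$. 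Translating via Proposition \ref{B}, this says exactly that $T_{\Sigma X}([\Gamma_\alpha(\Sigma X)_{\mathcal{O}_{n+1}}])=[\Gamma_{\operatorname{Ad}v}(\Sigma X)_{\mathbb{M}_n(\mathcal{O}_\infty)}]$. Under Lemma \ref{grr}, $[\Gamma_\alpha(\Sigma X)_{\mathcal{O}_{n+1}}]$ corresponds to $[\alpha]\in[X,\operatorname{Aut}(\mathcal{O}_{n+1})]$ and $[\Gamma_{\operatorname{Ad}v}(\Sigma X)_{\mathbb{M}_n(\mathcal{O}_\infty)}]$ to $[\operatorname{Ad}v]\in[X,\operatorname{Aut}(\mathbb{M}_n(\mathcal{O}_\infty))]$, whose image under $\operatorname{Ad}$ as in Theorem \ref{dp} is the class of $[v]_1\otimes 1\in K_1(C(X)\otimes\mathbb{M}_n(\mathcal{O}_\infty))\otimes\mathbb{Z}_n$; and $[v]_1=[U_\alpha]_1$ corresponds under the standard identifications to the image of $[\alpha]$ in $K^1(X)\otimes\mathbb{Z}_n$, since $U_\alpha$ is the Toeplitz lift of $u(\alpha)$ and $K^1(X)=K_1(C(X)\otimes E_{n+1})$ via the KK-equivalence $\mathbb{C}\to E_{n+1}$. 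So on $K^1(S^{k-1})\otimes\mathbb{Z}_n$ the map is the identity.

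The main obstacle I anticipate is the bookkeeping in the second part: making precise the chain of identifications
$$K^1(S^{k-1})\otimes\mathbb{Z}_n\xleftarrow{\sim}[S^{k-1},\operatorname{Aut}(\mathcal{O}_{n+1})]\xrightarrow{T_{S^k}}[S^{k-1},\operatorname{Aut}(\mathbb{M}_n(\mathcal{O}_\infty))]\xrightarrow{\sim}K^1(S^{k-1})\otimes\mathbb{Z}_n,$$
and checking that the isomorphisms on the two ends — coming from Theorem \ref{is} on the left and Theorem \ref{dp} on the right — are compatible with the classes $[\alpha]$, $[U_\alpha]_1$, $[v]_1$, $[\operatorname{Ad}v]$ appearing in Lemma \ref{iz}, so that the composite literally becomes $\operatorname{id}$ and not merely an isomorphism. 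In particular one must be careful that the Toeplitz lift $U_\alpha$ realizes, under the KK-equivalence $\mathbb{C}\to E_{n+1}$ and the index-map description of $\delta$ in Theorem \ref{is}, the same element of $K^1(S^{k-1})$ that the unitary $v$ represents in $K_1(C(S^{k-1})\otimes\mathbb{M}_n(\mathcal{O}_\infty))\cong K^1(S^{k-1})$, and that the $\otimes\mathbb{Z}_n$ reductions match. Once these identifications are pinned down, the identity $T_{S^k}=\operatorname{id}_{K^1(S^{k-1})\otimes\mathbb{Z}_n}$ follows directly. The first part, by contrast, is a formal naturality-plus-co-H-space argument and should be routine.
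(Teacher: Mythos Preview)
Your proposal is correct, and the second half (the computation of $T_{S^k}$ via Lemma \ref{iz} and Proposition \ref{B}) is exactly the paper's argument.

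For the first half you take a slightly different route from the paper. The paper does not argue directly with the pinch map; instead it replaces $\operatorname{BAut}(\mathcal{O}_{n+1})$ and $\operatorname{BAut}(\mathbb{M}_n(\mathcal{O}_\infty))$ by CW-models $Y_1,Y_2$ and invokes Brown's representability (specifically \cite[Lem.~1.7]{Br}) to realize the natural transformation by an actual map $f\colon Y_1\to Y_2$. Once $T_{SX}=f_*$, the homomorphism property is immediate. Your argument avoids Brown's lemma entirely and uses only naturality of $T$ together with the co-$H$ structure on $SX$: the wedge $SX\vee SX$ lies in $\mathcal{C}_0$, naturality along the two inclusions $SX\hookrightarrow SX\vee SX$ forces $T_{SX\vee SX}$ to act coordinatewise under the splitting $[SX\vee SX,\operatorname{B}G]\cong[SX,\operatorname{B}G]^2$, and naturality along the pinch then gives additivity. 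This is more elementary and self-contained; the paper's detour through a representing map is not really needed for this corollary, though it fits with the Brown-representability framework used elsewhere in the paper (e.g.\ the appeal to \cite[Lem.~1.5]{Br} for Theorem \ref{M}). Both approaches are short and valid.
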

\begin{proof}
First, we show that the map $T_{SX}$ is a group homomorphism.
By Lemma \ref{grr},
it suffices to construct a map $f : Y_1\to Y_2$ representing the natural transformation
$$[SX, Y_1]_0\to [SX, \operatorname{BAut}(\mathcal{O}_{n+1})]_0\xrightarrow{T_{SX}}[SX, \operatorname{BAut}(\mathbb{M}_n(\mathcal{O}_\infty))]_0\to [SX, Y_2]_0.$$
Now \cite[{Lem. 1.7}]{Br} shows the existence of such a map $f$.

Next,
we show that the map $T_{S^k}$ is bijective.
Fix $v\in U(C(X)\otimes\mathbb{M}_n(\mathcal{O}_\infty))$ and $[\alpha]\in [S^{k-1}, \operatorname{Aut}(\mathcal{O}_{n+1})]$ with $[v]_1\otimes \bar{1}=[U_\alpha]_1\otimes \bar{1}\in K^1(S^{k-1})\otimes\mathbb{Z}_n=[S^{k-1}, \operatorname{Aut}(\mathcal{O}_{n+1})]$ as in Lemma \ref{iz}.
Lemma \ref{iz} and Proposition \ref{B} implies $T_{S^k}([\Gamma_{\alpha}(\Sigma S^{k-1})_{\mathcal{O}_{n+1}}])=[\Gamma_{{\rm Ad}v}(\Sigma S^{k-1})_{\mathbb{M}_n(\mathcal{O}_\infty)}]$ (i.e., $T_{S^k}([\alpha])=[{\rm Ad}v]$).
Therefore, the map $T_{S^k}$ is identified with $${\rm id} : K^1(S^{k-1})\otimes\mathbb{Z}_n\ni [U_\alpha]_1\otimes\bar{1}\mapsto [v]_1\otimes \bar{1}\in K^1(S^{k-1})\otimes\mathbb{Z}_n.$$
\end{proof}
Now we have shown our main theorem.
\begin{thm}\label{M}
For every path connected finite CW-complex $X$,
the map $T_X$ is bijective.
\end{thm}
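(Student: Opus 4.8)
The plan is to assemble the two reductions already set up in the paper. First I would invoke the remark following Proposition \ref{B}: since $\operatorname{BAut}(\mathcal{O}_{n+1})$ and $\operatorname{BAut}(\mathbb{M}_n(\mathcal{O}_\infty))$ have countable homotopy groups they are represented by CW-complexes $Y_1$, $Y_2$ with countably many cells, so $T_X$ is a natural transformation of representable functors on $\mathcal{C}_0$; by Brown's representability theorem \cite[Lem.\ 1.5, 1.7]{Br} it is induced by a map $f\colon Y_1\to Y_2$, and $f$ is a weak homotopy equivalence --- equivalently $T_X$ is bijective for every $(X,x_0)\in\mathcal{C}_0$ --- precisely when $f$ is an isomorphism on all homotopy groups, i.e.\ when $T_{S^k}$ is bijective for every $k\ge 1$. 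So the problem reduces to the spheres.

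Next I would dispose of the spheres using Corollary \ref{gi}. For $k=1$ both sides are trivial, because $\pi_1(\operatorname{BAut}(\mathcal{O}_{n+1}))=\pi_0(\operatorname{Aut}(\mathcal{O}_{n+1}))=0$ and likewise for $\mathbb{M}_n(\mathcal{O}_\infty)$, so $T_{S^1}$ is vacuously bijective. For $k\ge 2$, Lemma \ref{grr} rewrites $T_{S^k}$ as a group homomorphism $[S^{k-1},\operatorname{Aut}(\mathcal{O}_{n+1})]\to[S^{k-1},\operatorname{Aut}(\mathbb{M}_n(\mathcal{O}_\infty))]$; since $K^0(S^{k-1})$ is torsion free, Theorem \ref{is}, 2) and Theorem \ref{dp}, 1) identify both of these groups with $K^1(S^{k-1})\otimes\mathbb{Z}_n$, and Corollary \ref{gi} says that under this identification $T_{S^k}$ is the identity map. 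Hence $T_{S^k}$ is bijective for all $k\ge 1$, which together with the first step gives the theorem.

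In this route the real content has already been spent: Corollary \ref{gi} rests on the Izumi--Pennig computation of Lemma \ref{iz}, which is the delicate point. What remains to be checked when proving Theorem \ref{M} itself is only that the hypotheses of Brown's theorem are met --- that the two functors are representable by complexes with countably many cells, and that $T_X=t_X\circ(\operatorname{B}(q)_*)^{-1}$ is genuinely natural in $X$ (this last from the pull-back compatibility $f^*(\mathcal{B}\otimes_{C(X)}\mathcal{E})\cong f^*\mathcal{B}\otimes_{C(Y)}f^*\mathcal{E}$). Since both were arranged earlier, I do not expect a new obstacle at this stage.
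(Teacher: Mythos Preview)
Your proposal is correct and follows essentially the same route as the paper: reduce to spheres via Brown's representability theorem (the Remark after Proposition~\ref{B}), then invoke Corollary~\ref{gi} to see that $T_{S^k}$ is the identity on $K^1(S^{k-1})\otimes\mathbb{Z}_n$. Your explicit treatment of the trivial case $k=1$ and the observation that $K^0(S^{k-1})$ is torsion free (so the short exact sequences of Theorems~\ref{dp} and~\ref{is} collapse) are just making explicit what the paper leaves implicit.
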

\begin{cor}\label{si}
We have $-\mathfrak{b}_{\mathcal{O}_\infty}=\operatorname{B}(\eta)_*\circ T_X$ and $-\operatorname{Im}(\mathfrak{b}_{\mathcal{O}_\infty})=\operatorname{Im}(\operatorname{B}(\eta)_*)$.
The map $T_{SX}$ gives a group isomorphism,
and the following diagram commutes $\colon$
$$\xymatrix{
K^1(X)\otimes\mathbb{Z}_n\ar[r]\ar@{=}[d]&[X, \operatorname{Aut}(\mathcal{O}_{n+1})]\ar[r]^{1-\delta\quad\quad}\ar[d]^{T_{SX}}&(1+\operatorname{Tor}(K^0(X), \mathbb{Z}_n))^\times\ar[d]^{\cdot^{-1}}\\
K^1(X)\otimes\mathbb{Z}_n\ar[r]^{\rm Ad \quad\quad}&[X, \operatorname{Aut}(\mathbb{M}_n(\mathcal{O}_\infty))]\ar[r]^{\eta_*\quad\quad}&(1+\operatorname{Tor}(K^0(X), \mathbb{Z}_n))^\times,
}$$
where the right vertical map sends an invertible element of the K-theory ring to its inverse.
\end{cor}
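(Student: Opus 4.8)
The plan is to prove Corollary \ref{si} by assembling the ingredients already established, so that only the commutativity of the square genuinely requires an argument. First I would record the identity $-\mathfrak{b}_{\mathcal{O}_\infty}=\operatorname{B}(\eta)_*\circ T_X$: by Theorem \ref{w} the field $\mathcal{B}=t_X([\mathcal{E}])$ satisfies $[\mathbb{K}\otimes\mathcal{B}]=-[\mathcal{A}]=-\mathfrak{b}_{\mathcal{O}_\infty}([\mathcal{E}])$, and since $\operatorname{B}(\eta)_*([\mathcal{B}])=[\mathbb{K}\otimes\mathcal{B}]$ by definition and $T_X=t_X\circ(\operatorname{B}(q)_*)^{-1}$ with $\mathfrak{b}_{\mathcal{O}_\infty}$ factoring through $(\operatorname{B}(q)_*)^{-1}$ (the remarks after Theorem \ref{b}), the identity is immediate. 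The equality $-\operatorname{Im}(\mathfrak{b}_{\mathcal{O}_\infty})=\operatorname{Im}(\operatorname{B}(\eta)_*)$ then follows from $T_X$ being bijective (Theorem \ref{M}): the image of $\operatorname{B}(\eta)_*\circ T_X$ equals the image of $\operatorname{B}(\eta)_*$, and this image is $-\operatorname{Im}(\mathfrak{b}_{\mathcal{O}_\infty})$ by the identity just proved.

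Next I would invoke Corollary \ref{gi}, which already states that $T_{SX}$ is a group homomorphism $[X,\operatorname{Aut}(\mathcal{O}_{n+1})]\to[X,\operatorname{Aut}(\mathbb{M}_n(\mathcal{O}_\infty))]$; combined with the bijectivity from Theorem \ref{M} applied to $SX$, this gives that $T_{SX}$ is a group isomorphism. For the commuting diagram, the left square is the assertion $T_{SX}\circ\operatorname{Ad}=\operatorname{Ad}$ on $K^1(X)\otimes\mathbb{Z}_n$ (identifying the domain $K^1(X)\otimes\mathbb{Z}_n$ on both rows via the identity), which is exactly the content of Lemma \ref{iz} reinterpreted through Proposition \ref{B}: for $[\alpha]\in K^1(X)\otimes\mathbb{Z}_n$ with associated $U_\alpha$, and $v$ a unitary with $[v]_1=[U_\alpha]_1$, Lemma \ref{iz} gives $[1_{\Gamma_\alpha(\Sigma X)_{\mathcal{O}_{n+1}}\otimes_{C(\Sigma X)}\Gamma_{\operatorname{Ad}v}(\Sigma X)_{\mathbb{M}_n(\mathcal{O}_\infty)}}]_0=0$, and Proposition \ref{B} translates this into $T_{SX}([\alpha])=[\operatorname{Ad}v]$; since the class of $[\alpha]$ in $K^1(X)\otimes\mathbb{Z}_n$ is $[U_\alpha]_1\otimes\bar 1=[v]_1\otimes\bar 1$, the left square commutes.

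For the right square I would argue that the vertical map $\cdot^{-1}$ is forced: a unital $C(X)$-algebra morphism chase shows $\eta_*\circ T_{SX}$ and $1-\delta$ differ by inversion in the multiplicative group $(1+\operatorname{Tor}(K^0(X),\mathbb{Z}_n))^\times$. Concretely, $\eta_*([\mathcal{B}])$ is represented by the class $[\mathcal{B}(e\otimes 1_{\mathcal{O}_\infty})^{\otimes\cdots}]$-type element of $K^0(X)^\times$ from Theorem \ref{DP}, while on the $\mathcal{O}_{n+1}$ side $1-\delta([u(\alpha)]_1)$ records the obstruction from the extension $\mathcal{A}\to\mathcal{E}\to\mathcal{O}$; the relation $[\mathbb{K}\otimes\mathcal{B}]=-[\mathcal{A}]$ together with the multiplicativity of the Dadarlat--Pennig class (Theorem \ref{DP}, 1)) turns the sign into a ring-theoretic inverse. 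The main obstacle is precisely this last bookkeeping: verifying that under all the identifications ($K_0(\mathcal{E})\cong K^0(X)$, the suspension isomorphism $S$, the pairing $j^*$) the element $\eta_*(T_{SX}([\alpha]))$ computed via $[\mathbb{K}\otimes\mathcal{B}]=-\mathfrak{b}_{\mathcal{O}_\infty}([\mathcal{E}])$ in $\bar E^1_{\mathcal{O}_\infty}(X)$ maps to the reciprocal of $1-\delta([u(\alpha)]_1)$ in $(1+\operatorname{Tor}(K^0(X),\mathbb{Z}_n))^\times$ rather than to it directly; I would pin this down by testing on $X=S^{k-1}$ using Corollary \ref{gi} (where $T_{S^k}=\operatorname{id}$ on $K^1(S^{k-1})\otimes\mathbb{Z}_n$) and the explicit descriptions in Theorems \ref{dp} and \ref{is}, then noting both horizontal rows are exact so the diagram commutes on all of $[X,\operatorname{Aut}(\mathcal{O}_{n+1})]$ once it commutes on generators and $T_{SX}$ is a homomorphism.
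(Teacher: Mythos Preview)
The paper gives no explicit proof of this corollary; it is meant to be read off from Theorem~\ref{w}, Theorem~\ref{M}, Corollary~\ref{gi}, and the identifications in Theorems~\ref{dp} and~\ref{is}. Your first two paragraphs do exactly this and are correct: the identity $-\mathfrak{b}_{\mathcal{O}_\infty}=\operatorname{B}(\eta)_*\circ T_X$ comes straight from Theorem~\ref{w}, the image statement from bijectivity of $T_X$, the group isomorphism from Corollary~\ref{gi} plus Theorem~\ref{M}, and the left square from Lemma~\ref{iz} and Proposition~\ref{B}, just as in the proof of Corollary~\ref{gi}.

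The genuine gap is in your treatment of the right square. Your fallback plan, to ``test on $X=S^{k-1}$'' and then invoke exactness, cannot work: for spheres $K^0(S^{k-1})$ is torsion-free, so $(1+\operatorname{Tor}(K^0(S^{k-1}),\mathbb{Z}_n))^\times=\{1\}$ and the right column carries no information there. More generally, exactness of the rows together with commutativity of the left square and bijectivity of $T_{SX}$ only tells you that \emph{some} automorphism of $(1+\operatorname{Tor}(K^0(X),\mathbb{Z}_n))^\times$ makes the right square commute; it does not identify that automorphism as inversion.

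What is actually needed is the identification of $\mathfrak{b}_{\mathcal{O}_\infty}$ on $SX$ with $1-\delta$, which you gesture at but do not carry out. Concretely: lift $\alpha:X\to\operatorname{Aut}(\mathcal{O}_{n+1})$ to $\tilde\alpha:X\to\operatorname{Aut}(E_{n+1}\otimes\mathcal{O}_\infty)$ via $q$; then under Lemma~\ref{grr} and Theorem~\ref{DP}, $\mathfrak{b}_{\mathcal{O}_\infty}([\Gamma_\alpha(\Sigma X)])$ is the class $[\tilde\alpha(e\otimes 1_{\mathcal{O}_\infty})]_0\in 1+\tilde K^0(X)$. Lifting $u(\alpha)=\sum\alpha(S_i)S_i^*$ by $\tilde u=\sum\tilde\alpha(T_i)T_i^*$ gives $1-\tilde u^*\tilde u=e$ and $1-\tilde u\tilde u^*=\tilde\alpha(e)$, hence $\delta([u(\alpha)]_1)=[e]_0-[\tilde\alpha(e)]_0=1-[\tilde\alpha(e\otimes 1)]_0$, i.e.\ $(1-\delta)([\alpha])=\mathfrak{b}_{\mathcal{O}_\infty}|_{SX}([\alpha])$. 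Combining this with your first paragraph and the fact that the additive inverse in $\bar E^1_{\mathcal{O}_\infty}(SX)$ becomes the multiplicative inverse in $1+\tilde K^0(X)$ yields $\eta_*\circ T_{SX}=(1-\delta)^{-1}$, which is the right square.
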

Thanks to Remark \ref{dd},
now we have another proof of the result \cite[{Th. 5.3}]{D1}.
\begin{cor}[{\cite[{Th. 5.3}]{D1}, \cite[{Th. 4.14, Rem. 4.13, 4.15}]{S}}]\label{ds}
The cardinality of $\mathfrak{b}_{\mathcal{O}_\infty}^{-1}(0)$ is equal to $|(n+\tilde{K}^0(X))/(1+\tilde{K}^0(X))|$.
In particular,
if $\operatorname{Tor}(H^{2k+1}(X), \mathbb{Z}_n)=0$ for $k\geq 1$,
one has $|[X, \operatorname{BAut}(\mathcal{O}_{n+1})]|=|(n+\tilde{K}^0(X))/(1+\tilde{K}^0(X))|$.
\end{cor}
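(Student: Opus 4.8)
The plan is to read the statement off from three facts already in place: $T_X$ is bijective (Theorem \ref{M}); $-\mathfrak{b}_{\mathcal{O}_\infty}=\operatorname{B}(\eta)_*\circ T_X$ and $-\operatorname{Im}(\mathfrak{b}_{\mathcal{O}_\infty})=\operatorname{Im}(\operatorname{B}(\eta)_*)$ (Corollary \ref{si}); and the cardinality computation of Remark \ref{dd}, which rests on Proposition \ref{cls}.

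First I would dispose of the assertion about $\mathfrak{b}_{\mathcal{O}_\infty}^{-1}(0)$. Since $T_X$ is bijective and $-\mathfrak{b}_{\mathcal{O}_\infty}=\operatorname{B}(\eta)_*\circ T_X$, it restricts to a bijection $\mathfrak{b}_{\mathcal{O}_\infty}^{-1}(0)\to\operatorname{B}(\eta)_*^{-1}(0)$, so it is enough to count $\operatorname{B}(\eta)_*^{-1}(0)$. Applying Proposition \ref{cls} to $[\mathcal{A}]=0$ with the trivialization $\mathcal{A}=C(X)\otimes\mathbb{K}\otimes\mathcal{O}_\infty$: then $K_0(\mathcal{A})=K^0(X)$, the map $\varphi\circ ev_{x_0}$ is the restriction $K^0(X)\to K^0(\{x_0\})=\mathbb{Z}$ whose fibre over $n$ is the coset $n+\tilde{K}^0(X)$, and $\sim$ is multiplication by $1+\tilde{K}^0(X)$; hence $\operatorname{B}(\eta)_*^{-1}(0)$ is in bijection with $(n+\tilde{K}^0(X))/(1+\tilde{K}^0(X))$, which is exactly the count recorded in Remark \ref{dd}. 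This yields $|\mathfrak{b}_{\mathcal{O}_\infty}^{-1}(0)|=|(n+\tilde{K}^0(X))/(1+\tilde{K}^0(X))|$.

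For the ``in particular'' clause, using the bijectivity of $T_X$ once more it suffices to show that under the torsion hypothesis $\mathfrak{b}_{\mathcal{O}_\infty}^{-1}(0)=[X,\operatorname{BAut}(\mathcal{O}_{n+1})]$, equivalently (by Corollary \ref{si}) $\operatorname{Im}(\operatorname{B}(\eta)_*)=0$, i.e. $\mathbb{K}\otimes\mathcal{B}$ is $C(X)$-linearly trivial for every locally trivial continuous field $\mathcal{B}$ of $\mathbb{M}_n(\mathcal{O}_\infty)$. I would prove this by obstruction theory, along the lines of the proof of Corollary \ref{gi} (using \cite[Lem. 1.7]{Br}). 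Represent $\operatorname{B}(\eta)_*$ by a cellular map $f\colon Y_{\mathbb{M}}\to Y_{\bar{E}}$ together with weak equivalences $Y_{\mathbb{M}}\to\operatorname{BAut}(\mathbb{M}_n(\mathcal{O}_\infty))$ and $Y_{\bar{E}}\to\operatorname{BAut}_0(\mathbb{K}\otimes\mathcal{O}_\infty)$. By the homotopy groups recalled in Section 2.1, $Y_{\mathbb{M}}$ is simply connected with homotopy $\mathbb{Z}_n$ in the even degrees $\geq 2$ and $0$ elsewhere; by Theorem \ref{DP}, 2) together with $\tilde{K}^0(S^{2k})=\mathbb{Z}$, $\tilde{K}^0(S^{2k+1})=0$ (the reduced ring of a sphere being trivial), $Y_{\bar{E}}$ has homotopy $\mathbb{Z}$ in the odd degrees $\geq 3$ and $0$ elsewhere. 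Thus $f$ is zero on all homotopy groups; and since the homotopy groups of $Y_{\mathbb{M}}$ lie in the Serre class of finite abelian groups whose orders involve only primes dividing $n$, each $\tilde{H}^j(Y_{\mathbb{M}},\mathbb{Z})$ with $j\geq 1$ is finite and annihilated by some power of $n$. Now given $[\mathcal{B}]$, classified by $g\colon X\to Y_{\mathbb{M}}$, the composite $f\circ g$ is zero on homotopy, so the obstructions to null-homotoping it (over the finitely many skeleta of the finite complex $X$) lie in $H^{2k+1}(X,\pi_{2k+1}(Y_{\bar{E}}))=H^{2k+1}(X,\mathbb{Z})$, $k\geq 1$, and each is the $g^*$-image of the corresponding universal obstruction for $f$, which lives in the $n$-power-torsion group $H^{2k+1}(Y_{\mathbb{M}},\mathbb{Z})$; hence each obstruction lies in the $n$-power-torsion subgroup of $H^{2k+1}(X,\mathbb{Z})$, which is $0$ because $\operatorname{Tor}(H^{2k+1}(X),\mathbb{Z}_n)=H^{2k+1}(X,\mathbb{Z})[n]=0$ forces $H^{2k+1}(X,\mathbb{Z})$ to contain no torsion of order divisible by a prime dividing $n$. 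So $f\circ g$ is null-homotopic, $\operatorname{B}(\eta)_*([\mathcal{B}])=0$, and with the first assertion we obtain $|[X,\operatorname{BAut}(\mathcal{O}_{n+1})]|=|(n+\tilde{K}^0(X))/(1+\tilde{K}^0(X))|$.

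The main obstacle is the ``in particular'' clause, and specifically the naturality bookkeeping in the obstruction argument: one must verify that at each stage the obstruction cocycle together with its indeterminacy really is pulled back along $g$ from a torsion class on $Y_{\mathbb{M}}$ (so that, landing in $\operatorname{Tor}(H^{2k+1}(X),\mathbb{Z}_n)=0$, the induction over the skeleta of $X$ closes up) — equivalently, that the Atiyah--Hirzebruch filtration of $\operatorname{B}(\eta)_*([\mathcal{B}])\in\bar{E}^1_{\mathcal{O}_\infty}(X)$ has all associated graded pieces in the image of $\mathbb{Z}_n$-coefficient cohomology and is therefore $n$-power torsion. Alternatively, for this clause one may simply invoke \cite[Th. 5.3]{D1} and \cite[Rem. 4.13, 4.15]{S} for the exhaustiveness of the Cuntz--Pimsner construction under this hypothesis, which says precisely that $\mathfrak{b}_{\mathcal{O}_\infty}$ vanishes identically. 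The first assertion, by contrast, is purely formal once Theorem \ref{M}, Corollary \ref{si} and Remark \ref{dd} are granted.
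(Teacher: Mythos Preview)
Your treatment of the first assertion is exactly the paper's intended argument: the sentence preceding the corollary (``Thanks to Remark~\ref{dd}, now we have another proof of the result \cite[{Th.~5.3}]{D1}'') records nothing more than the chain $T_X$ bijective, $-\mathfrak{b}_{\mathcal{O}_\infty}=\operatorname{B}(\eta)_*\circ T_X$, Remark~\ref{dd}. So for that part your proposal and the paper coincide.

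For the ``in particular'' clause the paper does not give a self-contained argument at all; it simply imports \cite[{Th.~4.14, Rem.~4.13, 4.15}]{S}, which establish that under the hypothesis $\operatorname{Tor}(H^{2k+1}(X),\mathbb{Z}_n)=0$ the invariant $\mathfrak{b}_{\mathcal{O}_\infty}$ vanishes identically. Your fallback ``alternatively, invoke \cite{D1} and \cite{S}'' is therefore precisely the paper's route. Your obstruction-theoretic attempt, by contrast, is genuinely different and would be an attractive internal proof, but the gap you flag is real and not just bookkeeping. The step ``each obstruction is the $g^*$-image of the corresponding universal obstruction for $f$'' fails as stated: to even speak of an obstruction to null-homotoping $fg$ at stage $2k+1$ you must first \emph{choose} a null-homotopy on the lower Postnikov section, and that choice does not come from $Y_{\mathbb{M}}$ (indeed $f$ itself need not be null on any Postnikov section of $Y_{\bar{E}}$, since $H^3(Y_{\mathbb{M}},\mathbb{Z})$ is nonzero). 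So there is no universal class on $Y_{\mathbb{M}}$ to pull back once you pass beyond the first nontrivial stage $P_3=K(\mathbb{Z},3)$. A correct version of this idea would have to work with the Atiyah--Hirzebruch filtration of $\bar{E}^1_{\mathcal{O}_\infty}(X)$ directly and show that the image of $\operatorname{B}(\eta)_*$ lands in the $n$-power-torsion part of each graded piece; this is essentially what \cite[{Rem.~4.13, 4.15}]{S} does, so one is led back to the paper's citation in any case.
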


\end{document}